\newcommand{\R}{\mathbb{R}}
\newcommand{\B}{\mathcal{B}}
\newcommand{\E}{\mathbb{E}}
\newcommand{\p}{\mathbb{P}}
\newcommand{\n}{\mathcal{N}}
\newcommand{\X}{\mathbf{X}}
\newcommand{\y}{\mathbf{y}}
\newcommand{\bG}{\boldsymbol{G}}
\DeclareMathOperator*{\diag}{diag}
\newtheorem{theorem}{Theorem}
\newtheorem{remark}{Remark}
\newtheorem{lemma}{Lemma}
\newtheorem{proposition}{Proposition}
\newtheorem{assumption}{Assumption}
\newtheorem{corollary}{Corollary}
\title{Bernstein--von Mises for Adaptively Collected Data}
\author{%
  Kevin Du \\
  Department of Statistics \\
  Harvard University \\
  \texttt{kevindu@college.harvard.edu} \\
  \And
  Yash Nair \\
  Department of Statistics \\
  Stanford University \\
  \texttt{yashnair@stanford.edu} \\
  \AND
  Lucas Janson \\
  Department of Statistics \\
  Harvard University \\
  \texttt{ljanson@fas.harvard.edu} \\
}
\begin{document}

\maketitle

\begin{abstract}
    Uncertainty quantification (UQ) for adaptively collected data, such as that coming from adaptive experiments, bandits, or reinforcement learning, is necessary for critical elements of data collection such as ensuring safety and conducting after-study inference.
    The data's adaptivity creates significant challenges for frequentist UQ, yet Bayesian UQ remains the same as if the data were independent and identically distributed (i.i.d.), making it an appealing and commonly used approach. 
    Bayesian UQ requires the (correct) specification of a prior distribution while frequentist UQ does not, but for i.i.d. data the celebrated Bernstein--von Mises theorem shows that as the sample size grows, the prior `washes out' and Bayesian UQ becomes \emph{frequentist}-valid, implying that the choice of prior need not be a major impediment to Bayesian UQ as it makes no difference asymptotically. This paper for the first time extends the Bernstein--von Mises theorem to adaptively collected data, proving asymptotic equivalence between Bayesian UQ and Wald-type frequentist UQ in this challenging setting. Our result showing this asymptotic agreement does not require the standard stability condition required by works studying validity of Wald-type frequentist UQ; in cases where stability is satisfied, our results combined with these prior studies of frequentist UQ imply frequentist validity of Bayesian UQ. Counterintuitively however, they also provide a negative result that Bayesian UQ is \emph{not} asymptotically frequentist valid when stability fails, despite the fact that the prior washes out and Bayesian UQ asymptotically matches standard Wald-type frequentist UQ. We empirically validate our theory (positive and negative) via a range of simulations.
    
\end{abstract}

\section{Introduction}

Data in applications such as robotics \cite{kober2013reinforcement}, healthcare \cite{yu2021reinforcement}, clinical trials \cite{bhatt2016adaptive}, online education \cite{rafferty2019statistical}, mobile health \cite{aguilera2020mhealth, yom2017encouraging}, and online advertising \cite{li2010contextual, chapelle2011empirical} is routinely being collected \emph{adaptively}. This means that the data is collected sequentially, with decisions about the data collection itself being made online based on all the data observed up to that time point. In particular, the online decisions often try to focus on actions or interventions that prior data indicates will produce large values of some reward function, and this type of data collection includes adaptive experiments, multi-armed bandits, and reinforcement learning.

Two important elements of adaptive data collection are (1) assessing at the end of the data collection what has been learned (e.g., to assess confidence in a putatively optimal clinical treatment or inform a future online advertising campaign) \citep{zhang2020inference,hadad2021confidence} and (2) making sure to avoid certain bad outcomes during the data collection process (e.g., avoiding crashing a robot while it is reinforcement learning) \citep{berkenkamp2017safe}. Both of these elements, as well as others, rely critically on \emph{uncertainty quantification} (UQ) of the parameters of the data-generating environment, yet standard frequentist UQ, such as Wald-type inference based on the maximum likelihood estimator (MLE), is made far more complicated by the adaptivity of the data collection \citep{lai1982least} which results in data which is not i.i.d., Markovian, or even stationary. 

On the other hand, the Bayesian statistical paradigm translates seamlessly to the adaptive setting, so that the vast wealth of Bayesian UQ methodology designed for i.i.d. data can be directly applied to adaptively collected data to provide strong Bayesian probabilistic guarantees. However, the validity of those guarantees is predicated on correctly specifying a prior distribution for the parameters, which can be challenging to do or justify in practice. For i.i.d. data, the celebrated Bernstein--von Mises (BvM) theorem \citep[][Theorem 10.1]{van2000asymptotic} proves that for any reasonable choice of prior, Bayesian UQ asymptotically matches that of prior-free Wald-type frequentist UQ, providing strong Bayesian and frequentist justification for Bayesian UQ in large samples \emph{without} the analyst having to worry too much about the choice of prior. Although BvM has been extended beyond i.i.d. data in a number of ways (see Section~\ref{sec:background} for more details), it has not been extended to the setting of adaptively collected data.

\paragraph{Contributions} This paper for the first time proves BvM results for adaptively collected data, showing that Bayesian UQ asymptotically matches (asymptotically normal) Wald-type frequentist UQ under certain mild conditions. Our first result, in Section~\ref{general}, applies to a very general class of adaptive linear Gaussian settings that include Gaussian multi-armed bandits, adaptive Gaussian linear bandits (a form of contextual bandit), and the linear quadratic regulator (a form of reinforcement learning on a Markov decision process). We then show in Section~\ref{MAB} that in the special case of multi-armed bandits, the conditions of our first result can be weakened, and the Gaussian assumption can also be generalized to any exponential family. Finally, we show in Section~\ref{context} that the conditions of the first result can also be weakened for linear (contextual) bandits. Section~\ref{sec:simulations} empirically validates our theoretical results. A surprising aspect of our BvM results is that they do not require the key stability condition used in frequentist validity results; however, it is known that Wald-type frequentist inference may fail to be asymptotically valid in cases when the stability condition does not hold. The counterintuitive implication is that in some adaptive settings, Bayesian UQ 
asymptotically matches frequentist UQ (and the prior becomes irrelevant), yet Bayesian UQ is \emph{not} asymptotically frequentist-valid. 

\paragraph{Notation and Terminology.} Throughout this paper, we refer to a confidence set as \textit{asymptotically frequentist-valid} at level $\alpha$ if for any parameter value, the limit inferior of the frequentist coverage is at least $1-\alpha$. Similarly, we refer to a credible set as \textit{asymptotically Bayesian-valid} at level $\alpha$ if the limit inferior of the Bayesian coverage is at least $1-\alpha$. We will use the term frequentist (resp. Bayesian) UQ generically to refer to any of the many forms of frequentist (resp. Bayesian) statistical inference such as hypothesis tests, confidence intervals, or prediction intervals (resp. such as Bayesian hypothesis tests, credible intervals, or posterior predictive intervals). By \textit{Wald-type frequentist UQ}, we refer to standard asymptotic hypothesis testing or confidence interval construction using the MLE and its asymptotic Normality---for instance, as demonstrated in Example 15.6 of \cite{van2000asymptotic}. Let $\lambda_{\mathrm{min}}(A)$ and $\lambda_{\mathrm{max}}(A)$ denote the minimum and maximum eigenvalues of the matrix $A$ respectively.

\section{Background}\label{sec:background}

As mentioned in the previous section, Bayesian statistical inference requires no adjustment for the adaptivity of the data. This both makes the Bayesian approach an appealing and commonly used tool for UQ in adaptively collected data and means there is no need for (and thus a lack of) prior work extending it to the adaptive setting. Thus, prior work on UQ for adaptively collected data has focused on frequentist UQ, with the primary approach being to make assumptions on both the data and the adaptive assignment algorithm by which it is collected. These assumptions either (1) enable the use of martingale central limit theorems to establish asymptotic normality of common frequentist estimators like the MLE (see, e.g., \cite{lai1982least,zhang2020inference,hadad2021confidence, bibaut2021post, deshpande2018accurate, luedtke2016statistical}) or (2) allow for conservative finite-sample inference via martingale-based concentration bounds \citep{howard2021time,kaufmann2021mixture,abbasi2011improved}. These conditions are often highly technical and hard to check, and reduce to or are related to a \emph{stability} condition introduced in \cite{lai1982least}; see Section~\ref{general} for more details. Another line of work uses randomization testing for frequentist UQ for adaptively collected data, providing non-asymptotic and non-conservative guarantees but relying on sampling procedures that can be computationally prohibitive to use \citep{nair2023randomization}.

The classical BvM theorem \citep[][Theorem 10.1]{van2000asymptotic} proves that under minimal conditions, for i.i.d. data drawn from a parametric model, the posterior distribution is asymptotically normal centered at the MLE with variance equal to the inverse Fisher information, and hence agrees asymptotically with (and inherits the asymptotic frequentist validity of) Wald-style frequentist inference.

Prior works generalizing BvM to non-i.i.d. data only apply in settings where the ratio of the maximum and minimum eigenvalues of the Fisher information matrix is bounded \citep{kleijn2012bernstein,connault2014weakly, bochkina2014bernstein, koers2023misspecified, bochkina2019bernstein, castillo2015bernstein}. This condition fails to hold in most adaptive settings where, over time, certain actions are learned to be better than others and are asymptotically sampled infinitely more often than the worse actions, resulting in an unbounded ratio (e.g., regret-optimal multi-armed bandit algorithms sample suboptimal arms logarithmically often, leading to an eigenvalue ratio that grows like $n/\log(n)$ \cite{auer2002finite}).

\section{BvM on Adaptive Linear Gaussian 
Data}\label{general}

This paper will primarily consider the adaptive data collection setting, laid out in Algorithm~\ref{gaussian_regression}, that at time step $j$ allows the data collector to choose via an arbitrary function $\Lambda$ a covariate vector $x_j$ based on all data observed so far ($H_{j-1} = ((x_1,y_1),\ldots,(x_{j-1}, y_{j-1}))$), and then observe $y_j \sim \n(x_j^\top \beta, \sigma^2)$. We assume the only unknown in this sampling procedure is $\beta$, and hence the task at hand is to perform UQ for $\beta$ based on the full data trajectory $H_n = (\X_n,\y_n)$, where $\X_n$ is the $n\times p$ matrix with $j$th row given by $x_j^\top$ and $\y_n$ is the $n$-vector with $j$th entry given by $y_j$.

\begin{algorithm}
\caption{Adaptive Linear Gaussian Sampling Procedure}\label{gaussian_regression}
\begin{algorithmic}
\State \textbf{Input} Covariate sampling rule $\Lambda : (\R^p \times \R)^* \to \Delta(\R^p)$, coefficient vector $\beta \in \R^p$, variance $\sigma^2 \in \R^+$
\State \textbf{Output} Sampling trajectory $H_n$
\\

\State $H_0 \gets \emptyset$
\For{$j=1,\ldots,n$}
    \State Sample $x_j \sim \Lambda(\cdot | H_{j-1})$
    \State Sample $y_j \sim \n(x_j^\top \beta, \sigma^2)$
    \State $H_j \gets ((x_1,y_1),\ldots,(x_j,y_j))$
\EndFor
\end{algorithmic}
\end{algorithm}

Here, $\Lambda$ denotes a placeholder that can represent any sampling algorithm including UCB, Thompson sampling, autoregressive models, etc. We now present our first result, which states that the posterior distribution is asymptotically normal, centered at the MLE with variance equal to the inverse empirical Fisher information.

\begin{theorem} \label{theorem:bvm}
    Suppose $\pi(\beta)$ is the prior distribution for $\beta$ and $\pi(\beta | H_n)$ is the posterior after observing the trajectory $H_n$. Let 
    $\hat{\beta}_n = (\X_n^\top \X_n)^{-1} \X_n^\top \y_n$ be the MLE for $\beta$ and let $\beta_0$ be the true value of $\beta$. Assume the sampling procedure in Algorithm~\ref{gaussian_regression} satisfies the following conditions:

    \begin{enumerate}[(i)]
        \item $\lambda_{\mathrm{min}}(\X_n^\top \X_n) \stackrel{p}{\to} \infty$ and $\log \lambda_{\mathrm{max}}(\X_n^\top \X_n) = o_p(\lambda_{\mathrm{min}}(\X_n^\top \X_n))$.

        \item $\pi(\cdot)$ is continuous with positive density at $\beta_0$.
    \end{enumerate}

    Then, the posterior distribution $\pi(\beta | H_n)$ satisfies

    \[
    \|\pi(\beta | H_n) - \n(\hat{\beta}_n, \sigma^2 (\X_n^\top \X_n)^{-1})\|_{\mathrm{TV}} \stackrel{p}{\to} 0.
    \]
\end{theorem}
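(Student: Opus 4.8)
The plan is to compute the posterior exactly, exploit the fact that the adaptive sampling rule drops out, and reduce the total-variation bound to a statement about how much the prior varies over the shrinking support of the target Gaussian. First I would write the joint density of the trajectory as $\prod_{j=1}^n \Lambda(x_j \mid H_{j-1})\,\phi_{\sigma^2}(y_j - x_j^\top\beta)$, where $\phi_{\sigma^2}$ is the $\n(0,\sigma^2)$ density. The crucial observation is that the factors $\Lambda(x_j\mid H_{j-1})$ do not depend on $\beta$, so they cancel between the likelihood and the normalizing constant; the posterior is therefore \emph{exactly} $\pi(\beta\mid H_n)\propto \pi(\beta)\exp(-\tfrac{1}{2\sigma^2}\|\y_n - \X_n\beta\|^2)$, identical to the fixed-design case. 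Completing the square about the least-squares solution (the cross term vanishes since $\X_n^\top(\y_n - \X_n\hat\beta_n)=0$) gives $\pi(\beta\mid H_n)\propto \pi(\beta)\exp(-\tfrac{1}{2\sigma^2}(\beta-\hat\beta_n)^\top A_n(\beta-\hat\beta_n))$ with $A_n = \X_n^\top\X_n$, whose Gaussian part is precisely the target $\n(\hat\beta_n, \sigma^2 A_n^{-1})$.

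Writing $\nu_n$ for the target density, the ratio $\pi(\beta\mid H_n)/\nu_n(\beta)$ equals $\pi(\beta)/C_n$ with $C_n = \E_{B\sim\nu_n}[\pi(B)]$, which yields the clean identity
\[
\|\pi(\cdot\mid H_n) - \nu_n\|_{\mathrm{TV}} = \tfrac{1}{2}\,\frac{\E\,|\pi(B) - \E\pi(B)|}{\E\pi(B)}, \qquad B\sim\n(\hat\beta_n,\sigma^2 A_n^{-1}).
\]
It then suffices to show $\pi(B)\con\pi(\beta_0)$ in $L^1$ of the $B$-randomness, which splits into (a) showing $B$ concentrates at $\beta_0$ (where $\pi$ is continuous and positive by (ii)), via consistency $\hat\beta_n\con\beta_0$ together with $B-\hat\beta_n = \sigma A_n^{-1/2}W$, $W\sim\n(0,I_p)$, whose scale $\lambda_{\mathrm{min}}(A_n)^{-1/2}$ vanishes; and (b) a uniform-integrability/tail argument controlling $\pi(B)$ where $B$ strays far from $\beta_0$ and $\pi$ may misbehave.

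For consistency I would use self-normalized martingale bounds. With $\epsilon = \y_n - \X_n\beta_0$, the score $M_n = \X_n^\top\epsilon$ is a vector martingale with predictable variation $\sigma^2 A_n$, and $\hat\beta_n - \beta_0 = A_n^{-1}M_n$ gives $\|\hat\beta_n-\beta_0\|^2 \le \lambda_{\mathrm{min}}(A_n)^{-1}\,M_n^\top A_n^{-1}M_n$. The method-of-mixtures inequality of \cite{abbasi2011improved} controls $M_n^\top A_n^{-1}M_n = O_p(\log\det A_n) = O_p(p\log\lambda_{\mathrm{max}}(A_n))$, so condition (i) forces $\|\hat\beta_n-\beta_0\|^2 = O_p\!\big(\log\lambda_{\mathrm{max}}(A_n)/\lambda_{\mathrm{min}}(A_n)\big)\con0$, establishing (a).

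The main obstacle is the tail term (b), since $\pi$ is only assumed regular at $\beta_0$ and could be unbounded or heavy-tailed elsewhere. Here I would exploit that $\pi$ integrates to one: fixing a radius $r$ on which continuity gives $\pi\le M_0$ near $\beta_0$, the set $\{\pi > M_0\}$ lies outside $\{\|\beta-\beta_0\|<r\}$, so on the good event $\{\|\hat\beta_n-\beta_0\|<r/2\}$ the far contribution is at most $\int_{\|\beta-\hat\beta_n\|\ge r/2}\pi(\beta)\nu_n(\beta)\,d\beta \le \frac{\det(A_n)^{1/2}}{(2\pi\sigma^2)^{p/2}}\exp(-\tfrac{\lambda_{\mathrm{min}}(A_n)r^2}{8\sigma^2})$, using $\nu_n(\beta)\le \det(A_n)^{1/2}(2\pi\sigma^2)^{-p/2}e^{-\lambda_{\mathrm{min}}(A_n)\|\beta-\hat\beta_n\|^2/(2\sigma^2)}$ and $\int\pi\le1$. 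Since $\det(A_n)^{1/2}\le\lambda_{\mathrm{max}}(A_n)^{p/2}$, the logarithm of this bound is $\tfrac{p}{2}\log\lambda_{\mathrm{max}}(A_n) - \tfrac{r^2}{8\sigma^2}\lambda_{\mathrm{min}}(A_n)+O(1)\con-\infty$ by condition (i)---this is exactly where the growth condition on $\log\lambda_{\mathrm{max}}$ relative to $\lambda_{\mathrm{min}}$ is essential---so the far contribution vanishes. On the complementary near region $\pi(B)$ is bounded by $M_0$ and converges to $\pi(\beta_0)$, so bounded convergence (formalized by passing to a.s.-convergent subsequences of the data to handle the data-dependent randomness) gives $\E\pi(B)\con\pi(\beta_0)$ and $\E|\pi(B)-\E\pi(B)|\con0$; substituting into the TV identity completes the proof.
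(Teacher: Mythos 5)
Your proposal is correct, and its skeleton matches the paper's proof: exact computation of the conjugate posterior (the adaptive factors $\Lambda(x_j\mid H_{j-1})$ cancel, so the posterior is exactly $\propto \pi(\beta)\,\nu_n(\beta)$ with $\nu_n = \n(\hat\beta_n,\sigma^2 A_n^{-1})$, $A_n=\X_n^\top\X_n$), reduction of the TV distance to the oscillation of the prior under $\nu_n$, and a near/far split in which condition (i) is used exactly where the paper uses it, to kill $\det(A_n)^{1/2}\le \lambda_{\mathrm{max}}(A_n)^{p/2}$ against $\exp(-c\,\lambda_{\mathrm{min}}(A_n))$. Three ingredients genuinely differ, though. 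First, you use the exact identity $\|\pi(\cdot\mid H_n)-\nu_n\|_{\mathrm{TV}}=\tfrac12\,\E|\pi(B)-\E\pi(B)|/\E\pi(B)$, whereas the paper uses the one-sided bound of its Lemma 3, $\|P-Q\|_{\mathrm{TV}}\le\int(cP-Q)_+$ with $c=1/\pi(\beta_0)$; the paper's anchoring at the deterministic value $\pi(\beta_0)$ spares it from controlling the random normalizer $C_n=\E\pi(B)$ separately, while your identity is tight and equally workable. Second, you truncate to a \emph{fixed} Euclidean ball of radius $r/2$ around $\hat\beta_n$, while the paper truncates to the shrinking anisotropic ellipsoid $\{\|A_n^{1/2}(\beta-\hat\beta_n)\|^2\le c_n\}$ with $c_n=\sqrt{\lambda_{\mathrm{min}}\log\lambda_{\mathrm{max}}}$; your fixed ball suffices because the far-region bound only needs $\lambda_{\mathrm{min}}\gg\log\lambda_{\mathrm{max}}$, at the cost of replacing the paper's sup-over-a-shrinking-set argument on the near region with a bounded-convergence/subsequence argument. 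Third, and most substantively, you reprove consistency of $\hat\beta_n$ via the self-normalized method of mixtures of \cite{abbasi2011improved}, obtaining the explicit rate $\|\hat\beta_n-\beta_0\|^2=O_p\bigl(\log\lambda_{\mathrm{max}}(A_n)/\lambda_{\mathrm{min}}(A_n)\bigr)$, where the paper simply cites Theorem 1 of \cite{lai1982least}. This buys you something real: Lai--Wei's consistency theorem is stated under \emph{almost-sure} eigenvalue conditions, while condition (i) here holds only in probability, so invoking it strictly requires a subsequence argument that the paper elides; your high-probability bound applies directly and also quantifies the rate. One small detail to patch: the mixture inequality is stated for a regularized Gram matrix $\bar A_n = A_n+\lambda I$, so you should pass through $M_n^\top A_n^{-1}M_n\le 2\,M_n^\top\bar A_n^{-1}M_n$ on the event $\{\lambda_{\mathrm{min}}(A_n)\ge\lambda\}$, whose probability tends to one under condition (i) --- a harmless fix.
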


Theorem~\ref{theorem:bvm}'s proof, given in Appendix \ref{proof:bvm}, first upper bounds the key total variation (TV) distance in terms of the $L^1$ distances of \emph{unnormalized} densities, allowing us to ignore normalizing constants in our analysis. Our proof then uses the fact that the posterior distribution is not affected by the adaptivity of the algorithm, since terms in the likelihood involving the covariate sampling process $\Lambda$ do not depend on the parameter and can thus be absorbed into the normalizing constant. We also use a common trick in BvM-style proofs \cite{kleijn2012bernstein, van2000asymptotic} of truncating the posterior to local ellipsoids defined as $\{\beta : \|(\X_n^\top \X_n)^{1/2} (\beta - \beta_0)\| \leq M_n \}$ for some local radius $M_n$. This truncation, however, differs from those used in previous BvM-style proofs, as here the empirical Fisher information $\X_n^\top \X_n/\sigma^2$ does not necessarily have bounded condition number, implying that the resulting ellipsoids may be highly anisotropic, stretching much wider in some directions than in others. This step of our proof relies critically on the second part of condition (i) of Theorem 1 which allows us to show that the density of the representative normal distribution outside of local ellipsoids converges to zero. We furthermore show that the posterior can be written as proportional to the prior multiplied by the density of the representative normal, meaning we can truncate the distribution to a local ellipsoid even if the prior is unbounded.

Our proof requires condition \emph{(i)} for a couple of reasons. If the maximum eigenvalue of the observation matrix grows exponentially faster than the minimum eigenvalue, there are two potential pathological consequences. First, Lai and Wei showed that condition \emph{(i)} of Theorem \ref{theorem:bvm} is nearly necessary for MLE consistency; in particular, if condition \emph{(i)} of Theorem 1 is removed, the MLE is not necessarily consistent \citep[][Example 1]{lai1982least}. Second, even if $\hat{\beta}_n$ is consistent but condition \emph{(i)} fails, it may no longer be the case that the density of the distribution $\n(\hat{\beta}_n, \sigma^2 (\X_n^\top \X_n)^{-1})$ at some other point $\beta' \neq \beta_0$ converges to $0$. One can see this by writing the density as

\[
\n(\beta';\hat{\beta}_n, \sigma^2 (\X_n^\top \X_n)^{-1}) = \sqrt{\frac{|\X_n^\top \X_n|}{(2 \pi \sigma^2)^p}} \exp(-\frac{1}{2 \sigma^2} (\beta' - \hat{\beta}_n)^\top \X_n^\top \X_n (\beta' - \hat{\beta}_n)),
\] 

where $\n(x; \mu, \sigma^2)$ denotes the density value at $x$ of the $\n(\mu,\sigma^2)$ distribution. While the exponential term in the above expression necessarily converges to zero if $\beta' \neq \beta_0$, $\hat{\beta}_n$ is consistent, and $\lambda_{\mathrm{min}}(\X_n^\top \X_n) \stackrel{p}{\to} \infty$, it is not necessarily true that the entire expression converges to zero or is even bounded. This is because the quadratic term inside the exponential may only be of order $O(\lambda_{\mathrm{min}}(\X_n^\top \X_n))$ whereas the term $|\X_n^\top \X_n|$ grows at least as quickly as $\lambda_{\mathrm{max}}(\X_n^\top \X_n)$. Thus, although the measure of a neighborhood of $\beta'$ under the probability measure $\n(\hat{\beta}_n, \sigma^2 (\X_n^\top \X_n)^{-1})$ must converge to zero, without condition \emph{(i)}, it is possible that the \textit{density} of this distribution diverges at $\beta'$. We show in the proof of Theorem \ref{theorem:bvm} that the likelihood function is proportional to the density of $\n(\hat{\beta}_n, \sigma^2 (\X_n^\top \X_n)^{-1})$, meaning that it might be possible that the density of both the prior and the normalized likelihood function diverge at $\beta'$. This could potentially cause the posterior to have asymptotically non-negligible measure in a neighborhood of $\beta'$, which would violate the BvM statement as the measure of the normal distribution $\n(\hat{\beta}_n, \sigma^2 (\X_n^\top \X_n)^{-1})$ converges to zero in a neighborhood of $\beta'$.

Notably, Theorem~\ref{theorem:bvm} does not require the key stability condition, originally from \cite{lai1982least} but used in many works since then for frequentist UQ for adaptively collected data, which 
assumes the existence of a deterministic sequence $\mathbf{B}_n$ for which $\mathbf{B}_n^{-1} (\X_n^\top \X_n)^{1/2} \stackrel{p}{\to} I_{p \times p}$. Without this condition, $\hat{\beta}_n$ may fail to be asymptotically normal \citep[][Example 3]{lai1982least}. In Proposition~\ref{lai-wei-counterex} in Appendix~\ref{app:aux} we show that their example \emph{does}, however, satisfy the conditions of our Theorem~\ref{theorem:bvm}. This leads to the rather surprising result that there are settings where Bayesian UQ is asymptotically equivalent to standard Wald-style frequentist UQ, yet the latter (and therefore also the former) is asymptotically frequentist-invalid. We will describe another such example in the next section, which requires the triangular array version of BvM we prove there.

The previous paragraph notes that Theorem~\ref{theorem:bvm} can indicate either asymptotic frequentist validity or invalidity of Bayesian UQ, depending on the situation. However, in terms of Bayesian validity, Theorem~\ref{theorem:bvm} (and indeed all the theorems in the paper) provide a strong positive result regarding misspecified priors: 
they say that the prior distribution $\pi$ is `washed out' as $n \rightarrow \infty$. Thus, from a Bayesian perspective, a credible interval asymptotically has the correct coverage even if the prior is misspecified, as long as both the correct prior and the misspecified prior used for the inference are continuous and bounded, with the misspecified prior's support containing that of the correct prior. Note, however, that the rate at which the ``wash out'' effect occurs depends on the rate at which the posterior distribution converges in TV distance to a normal distribution, which may be logarithmically slow for optimal bandit algorithms. Thus, it may not be accurate in finite samples to treat a misspecified prior as having ``washed out''.

\section{BvM on Multi-armed Bandits}\label{MAB}

A notable special case of Algorithm \ref{gaussian_regression} is the multi-armed bandit setting, which corresponds to the case when $\Lambda$'s output is supported only on the basis vectors. The multi-armed bandit setting can model experiments involving adaptively chosen treatment arms---typically by some optimization algorithm like UCB \cite{auer2002finite} or Thompson sampling \cite{kaufmann2012thompson}---where we want to estimate the mean outcome of each arm. For the bandit setting, we will use the notation $N_{i,n}, \hat{\mu}_i^{(n)}$ to denote the count and sample mean respectively of the pulls of arm $i$ in the first $n$ steps.

As discussed in the prior section, our BvM proof requires the assumption that the maximum eigenvalue of the data matrix grows subexponentially with respect to the minimum eigenvalue. However, this assumption is violated by many popular bandit algorithms such as UCB, where suboptimal arms are pulled only $O_p(\log n)$ times. Thus, we require a modification of the above proof to specifically the case of bandits to include these existing algorithms. We also show that we can generalize our result to triangular arrays of data---that is, we now allow the adaptive decision rule to depend on the sequence length $n$ and superscript it by $n$ to make this dependence explicit: $\Lambda^n$.
However, we assume that the parameter $\beta_0$, variance $\sigma^2$, and prior $\pi$ remain the same throughout the array. The triangular array formulation enables us to extend our results to sampling procedures which have policies depending on the length of the overall experiment such as is the case in the batched bandit setting \cite{zhang2020inference}. First, we show a consistency result for the case of triangular array bandits.

\begin{lemma} \label{lemma:triangular}
    Suppose independent length-$m_n$ trajectories $H_{m_n}^n = ((x_1^n, y_1^n),\ldots,(x_{m_n}^n, y_{m_n}^n))$ are drawn via Algorithm~\ref{gaussian_regression} using sampling rules $\Lambda^n$ for each $n$, where $\beta$ and $\sigma^2$ remain the same for all trajectories. Let $\X_n = \begin{pmatrix}
        x_1^n & \cdots & x_{m_n}^n
    \end{pmatrix}^\top$ and $\y_n = \begin{pmatrix}
        y_1^n & \cdots & y_{m_n}^n
    \end{pmatrix}^\top$. Assume the triangular array version of Algorithm~\ref{gaussian_regression} satisfies the following conditions:

    \begin{enumerate}[(i)]
        \item Each $x_j^n$ is a basis vector, i.e. $x_j ^n\in \{e_1,\ldots,e_p\}$.
        \item $\lambda_{\mathrm{min}}(\X_n^\top \X_n) \stackrel{p}{\to} \infty$.
    \end{enumerate}

    Then, the MLE $\hat{\beta}_n = (\X_n^\top\X_n)^{-1} \X_n^\top \y_n$ is consistent, i.e. $\hat{\beta}_n \stackrel{p}{\to} \beta_0$.
\end{lemma}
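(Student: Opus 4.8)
The plan is to exploit the fact that in the multi-armed bandit case condition (i) forces the design to be diagonal, so the problem decouples across arms into a statement about random-length averages of i.i.d.\ Gaussian noise. Writing $A_k$ for the arm pulled at step $k$ (so $x_k^n = e_{A_k}$), we have $\X_n^\top \X_n = \diag(N_{1,n},\ldots,N_{p,n})$, and the $i$th coordinate of the MLE is exactly the sample mean $\hat\mu_i^{(n)}$, which is well-defined on the event $\{\min_i N_{i,n}\ge 1\}$ whose probability tends to $1$. Since $\lambda_{\mathrm{min}}(\X_n^\top \X_n)=\min_i N_{i,n}$, condition (ii) gives $N_{i,n}\con \infty$ for every $i$. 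Because $p$ is fixed and finite, it suffices to prove that each coordinate satisfies
\[
\hat\mu_i^{(n)}-\beta_{0,i}=\frac{1}{N_{i,n}}\sum_{k=1}^{m_n}\mathds{1}\{A_k=i\}\,\epsilon_k^n \con 0, \qquad \epsilon_k^n := y_k^n - x_k^{n\top}\beta_0,
\]
and then assemble the $p$ coordinatewise statements into the vector conclusion.

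The main obstacle is \emph{adaptivity}: the count $N_{i,n}$ is random and statistically dependent on the very noise terms being averaged (the algorithm $\Lambda^n$ chooses $A_k$ based on past $\epsilon$'s), so a naive law of large numbers does not apply directly. To handle this I would use a coupling that leverages the crucial feature of Algorithm~\ref{gaussian_regression} in the bandit case: conditional on the history and the chosen arm, the noise is $\n(0,\sigma^2)$ \emph{regardless of which arm is pulled or when}. For each fixed $n$, pre-draw mutually independent i.i.d.\ sequences $(Z_{i,l}^n)_{l\ge 1}\iid \n(0,\sigma^2)$, $i=1,\ldots,p$, independent of the internal randomness of $\Lambda^n$, and realize the trajectory by revealing $Z_{i,l}^n$ as the noise on the $l$th pull of arm $i$. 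Since each decision $A_k$ depends only on already-revealed $Z$'s, the coupled process has the same law as the original, and under it $\sum_{k\le m_n}\mathds{1}\{A_k=i\}\,\epsilon_k^n = \sum_{l=1}^{N_{i,n}} Z_{i,l}^n$, i.e.\ a sum of $N_{i,n}$ i.i.d.\ Gaussians with a random, dependent index.

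It then remains to show $\frac{1}{N_{i,n}}\sum_{l=1}^{N_{i,n}} Z_{i,l}^n \con 0$ given $N_{i,n}\con\infty$. I would argue this by a maximal-inequality/Anscombe-style bound that sidesteps the dependence between the index and the summands: for any $M\in\N$,
\[
\p\!\left(\Big|\tfrac{1}{N_{i,n}}\textstyle\sum_{l=1}^{N_{i,n}} Z_{i,l}^n\Big|>\epsilon\right)\le \p\big(N_{i,n}<M\big)+\p\!\left(\sup_{m\ge M}\Big|\tfrac{1}{m}\textstyle\sum_{l=1}^{m} Z_{i,l}^n\Big|>\epsilon\right),
\]
using the pointwise inclusion $\{|\tfrac{1}{N_{i,n}}\sum_{l\le N_{i,n}}Z_{i,l}^n|>\epsilon\}\cap\{N_{i,n}\ge M\}\subseteq\{\sup_{m\ge M}|\tfrac1m\sum_{l\le m}Z_{i,l}^n|>\epsilon\}$, valid irrespective of how $N_{i,n}$ depends on the $Z$'s. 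The second probability does not depend on $n$ (each $(Z_{i,l}^n)_l$ has the same law), and by the strong law $\tfrac1m\sum_{l\le m}Z_{i,l}\to 0$ a.s.\ it tends to $0$ as $M\to\infty$; the first tends to $0$ as $n\to\infty$ by condition (ii). Taking $\limsup_n$ and then $M\to\infty$ yields the coordinatewise convergence, and a union bound over the $p$ coordinates completes the proof. The one subtlety to flag is that the triangular-array structure precludes using a single fixed noise sequence across $n$; this is precisely why the bound above is phrased so that only the \emph{distribution} of each sequence (which is constant in $n$) enters the controlling term.
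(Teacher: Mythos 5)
Your proof is correct and takes essentially the same route as the paper's: both arguments serialize the arm pulls into pre-drawn i.i.d.\ noise sequences (so the randomly indexed average has the law you describe) and then convert the strong law into convergence in probability via a tail bound that depends only on the common distribution of the sequence and is therefore $n$-free---your maximal-inequality inclusion is exactly the content of the paper's bound-translation proposition, whose $\epsilon_\eta(B)=\tfrac{1}{B}+\p[\exists\, m\ge B:\ |\bar Y_m-\mu|>\eta]$ coincides with your two-term decomposition. The only cosmetic difference is that the paper fixes a diverging sequence $B_n$ with $\p[N_{i,n}\ge B_n]\ge 1-\tfrac{1}{B_n}$, while you keep the cutoff $M$ free and take iterated limits, which is the same argument packaged slightly more cleanly.
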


The proof of Lemma \ref{lemma:triangular} appears in Appendix \ref{appx:lemma_proof}. In addition to this consistency result, we also need a condition that allows us to truncate the posterior distribution. Note that the theorem below assumes that the prior density $\pi$ is bounded which we did not need in the proof of Theorem \ref{theorem:bvm}; this condition guarantees that the posterior can be asymptotically approximated as proportional to the likelihood when the regularity condition on the maximum eigenvalue is not met. With these changes, we no longer need the condition that $\log \lambda_{\mathrm{max}}(\X_n^\top \X_n) = o_p(\lambda_{\mathrm{min}}(\X_n^\top \X_n))$, making Theorem~\ref{thm:bvm_bandit} very broadly applicable to standard bandit algorithms.

\begin{theorem} \label{thm:bvm_bandit}
    Assume the triangular array version of Algorithm~\ref{gaussian_regression} satisfies the following conditions:

    \begin{enumerate}[(i)]
        \item Each $x_j^n$ is a basis vector, i.e. $x_j^n \in \{e_1,\ldots,e_p\}$.
        
        \item $\lambda_{\mathrm{min}}(\X_n^\top \X_n) \stackrel{p}{\to} \infty$.

        \item $\pi(\cdot)$ has continuous and bounded density on $\R^p$ which is positive at $\beta_0$.
    \end{enumerate}

    Then, the posterior distribution $\pi(\beta | H_{m_n}^n)$ satisfies

    \[
    \|\pi(\beta | H_{m_n}^n) - \n(\hat{\beta}_n, \sigma^2 (\X_n^\top \X_n)^{-1})\|_{\mathrm{TV}} \stackrel{p}{\to} 0.
    \]
\end{theorem}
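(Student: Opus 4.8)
The plan is to exploit the special structure of the bandit setting to obtain an exact closed-form representation of the posterior and then reduce the total variation distance to a single expectation under the target normal. Since each $x_j^n$ is a basis vector, $\X_n^\top \X_n = \diag(N_{1,n},\ldots,N_{p,n})$ is diagonal with the arm-pull counts on its diagonal, so its eigenvalues are exactly the $N_{i,n}$ and condition (ii) says that $\min_i N_{i,n} \con \infty$, i.e. every arm is pulled infinitely often. As in the proof of Theorem~\ref{theorem:bvm}, completing the square in the Gaussian log-likelihood shows that, as a function of $\beta$, the likelihood is proportional to the density $q(\beta) := \n(\beta; \hat{\beta}_n, \sigma^2(\X_n^\top\X_n)^{-1})$ of the target normal. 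Writing the posterior as prior times likelihood and absorbing $\beta$-free factors into the normalizing constant, I would record the key identity $\pi(\beta\mid H_{m_n}^n) = \pi(\beta)\,q(\beta)/Z_n$ with $Z_n = \E_q[\pi] := \int \pi(\beta')q(\beta')\,d\beta'$.

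With this representation the TV distance collapses to
\[
\|\pi(\beta \mid H_{m_n}^n) - q\|_{\mathrm{TV}} = \tfrac12 \int q(\beta)\,\Big|\tfrac{\pi(\beta)}{\E_q[\pi]} - 1\Big|\,d\beta = \tfrac12\,\E_q\Big[\,\big|\pi(\beta)/\E_q[\pi] - 1\big|\,\Big],
\]
so it suffices to show this expectation vanishes in probability. The engine driving this is concentration of $q$ at $\beta_0$: Lemma~\ref{lemma:triangular} gives $\hat{\beta}_n \con \beta_0$, while the largest standard deviation of $q$ in any direction is $\sigma/\sqrt{\lambda_{\mathrm{min}}(\X_n^\top\X_n)} \con 0$, so for every $\delta>0$ the mass $q(\{\|\beta - \beta_0\| > \delta\})$ tends to $0$ in probability.

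To finish I would show both $\E_q[\pi] \con \pi(\beta_0)$ and that the integrand above is negligible, by splitting $\R^p$ into a ball $B_\delta = \{\|\beta-\beta_0\|\le \delta\}$ and its complement. On $B_\delta$, continuity and positivity of $\pi$ at $\beta_0$ give $\pi(\beta) = \pi(\beta_0)(1+o(1))$ uniformly over $\beta \in B_\delta$, while $q(B_\delta)\con 1$; hence the in-ball contributions to both $\E_q[\pi]$ and the TV integral behave as if $\pi$ were the constant $\pi(\beta_0)$, and the corresponding ratio is $1+o_p(1)$. On the complement, boundedness of $\pi$ by some $B<\infty$ bounds the contribution by $B\cdot q(\{\|\beta-\beta_0\|>\delta\}) \con 0$ (for $\E_q[\pi]$) and by $(B/\E_q[\pi] + 1)\,q(\{\|\beta-\beta_0\|>\delta\}) \con 0$ (for the TV integral, using $\E_q[\pi]\to\pi(\beta_0)>0$). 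Taking $n\to\infty$ for each fixed $\delta$ and then sending $\delta\to 0$ yields the claim.

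The main obstacle is precisely this tail control, and it is exactly here that the boundedness of $\pi$ in condition (iii) earns its keep and lets us discard the growth condition $\log\lambda_{\mathrm{max}}(\X_n^\top\X_n) = o_p(\lambda_{\mathrm{min}}(\X_n^\top\X_n))$ used in Theorem~\ref{theorem:bvm}. In the general setting the representative normal could place non-negligible density far from $\beta_0$ along the most under-sampled direction, and the earlier proof used the eigenvalue-ratio condition to kill this density; here we instead dominate the tail crudely by $\sup\pi$ times a vanishing $q$-probability, which requires no control of the condition number of $\X_n^\top\X_n$. The two ingredients that make this work and are special to the bandit triangular array are (a) the consistency of $\hat{\beta}_n$ from Lemma~\ref{lemma:triangular}, which holds under $\lambda_{\mathrm{min}}\con\infty$ alone, and (b) a bounded prior. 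A secondary, purely bookkeeping difficulty is that $q$, $\hat{\beta}_n$, and $Z_n$ are all random, so the limits above should be made rigorous either through high-probability $\varepsilon$--$\eta$ events or by passing to almost surely convergent subsequences before bounding the TV distance.
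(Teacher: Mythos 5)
Your proposal is correct, and it rests on exactly the same two pillars as the paper's proof: the exact representation of the posterior as prior times the representative normal density (valid because the Gaussian likelihood, after completing the square, is proportional in $\beta$ to $\n(\beta;\hat{\beta}_n,\sigma^2(\X_n^\top\X_n)^{-1})$), and consistency of $\hat{\beta}_n$ from Lemma~\ref{lemma:triangular}, with boundedness of the prior dominating the tails so that the eigenvalue-ratio condition of Theorem~\ref{theorem:bvm} can be dropped. Where you diverge is the final convergence mechanism. The paper never estimates the normalizing constant: it invokes Lemma~\ref{lemma:TV}, which permits replacing the normalization by the \emph{arbitrary} constant $\pi(\beta_0)$ inside a one-sided positive-part bound, yielding $d(H_{m_n}^n) \leq \E[(\pi(\kappa_n)/\pi(\beta_0)-1)_+ \mid H_{m_n}^n]$ with $\kappa_n \mid H_{m_n}^n \sim \n(\hat{\beta}_n,\sigma^2(\X_n^\top\X_n)^{-1})$; since $\kappa_n \con \beta_0$ and the integrand is bounded, Vitali's (bounded) convergence theorem gives $\E[d(H_{m_n}^n)] \to 0$ and hence convergence in probability. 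You instead work with the exact identity $\|\pi(\cdot\mid H_{m_n}^n)-q\|_{\mathrm{TV}} = \tfrac12\,\E_q\bigl[\,|\pi/\E_q[\pi]-1|\,\bigr]$, which obliges you to prove $\E_q[\pi]\con\pi(\beta_0)$ and to run the explicit ball/complement split. Your route is more elementary (no appeal to Vitali and no auxiliary TV lemma) and makes the role of $Z_n$ transparent, at the cost of the extra bookkeeping you yourself flag (all of $q$, $\hat{\beta}_n$, $Z_n$ are random, and the iterated $n\to\infty$, $\delta\to 0$ limits need an $\varepsilon$--$\delta$ or subsequence argument); the paper's one-sided bound buys a shorter proof precisely by never needing $Z_n$ at all. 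Both arguments are sound and yield the same rate-limiting dependence on the growth of $\lambda_{\mathrm{min}}(\X_n^\top\X_n)$.
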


\begin{remark}
    This result assumes homoskedasticity of the bandits, but it also holds if each arm has a different (but still known) variance $\sigma_1^2,\ldots,\sigma_p^2>0$, as shown in Theorem \ref{thm:bvm_hetero} in the Appendix \ref{appx:hetero}.
\end{remark}

The proof of Theorem \ref{thm:bvm_bandit} appears in Appendix \ref{proof:bvm_bandit}. Note that the rate of convergence of the TV distance in Theorem \ref{thm:bvm_bandit} depends on the growth rate of $\lambda_{\mathrm{min}}(\X_n^\top \X_n)$, which can be logarithmically slow for optimal bandit algorithms as seen in Section \ref{sec:simulations}. Theorem \ref{thm:bvm_bandit} gives the following corollary (proven in Appendix \ref{app:aux}) in the setting of non-triangular-array bandits, showing that the BvM convergence statement is uniform over sampling rules as long as $\lambda_{\mathrm{min}}(\X_n^\top \X_n)$ grows arbitrarily large among these sampling rules.

\begin{corollary}
    For any sequences $(r_n, \epsilon_n)$ for which $r_n \to \infty$ and $\epsilon_n \to 0$, let $\mathcal{P}_n$ be the sequence of sets of distributions $P$ of trajectories induced by sampling rules $\Lambda$ such that for all $n$ and for all $P\in \mathcal{P}_n$, 

    \begin{enumerate}[(i)]
        \item Each $x_j^n$ is a basis vector, i.e. $x_j^n \in \{e_1,\ldots,e_p\}$.

        \item $P(\lambda_{\mathrm{min}}(\X_n^\top \X_n) > r_n) > 1-\epsilon_n$.
    \end{enumerate}
    
    Then, we have for any $c>0$,

    $$\limsup_{n \to \infty} \sup_{P \in \mathcal{P}_n} P(\|\pi(\beta|H_n) - \mathcal{N}(\hat{\beta}_n, \sigma^2(\X_n^\top \X_n)^{-1})\|_{\mathrm{TV}} > c) = 0.$$
\end{corollary}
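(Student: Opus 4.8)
The plan is to deduce this uniform statement from the (already pointwise-over-sampling-rules) triangular-array result of Theorem~\ref{thm:bvm_bandit} by a near-supremizer argument. Write $D_n := \|\pi(\beta|H_n) - \mathcal{N}(\hat{\beta}_n, \sigma^2(\X_n^\top \X_n)^{-1})\|_{\mathrm{TV}}$ and fix $c>0$. For each $n$ the quantity $s_n := \sup_{P\in\mathcal{P}_n} P(D_n > c)$ lies in $[0,1]$, so by definition of the supremum (assuming $\mathcal{P}_n\neq\emptyset$, else the claim is vacuous) I can choose a distribution $P_n \in \mathcal{P}_n$ with $P_n(D_n > c) > s_n - 1/n$. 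By definition of $\mathcal{P}_n$, each such $P_n$ is induced by some sampling rule $\Lambda^n$ satisfying conditions (i)--(ii) of the corollary.

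First I would assemble these choices into a single triangular array: let the $n$-th row be a length-$n$ trajectory $H^n_{m_n}$ (with $m_n = n$) drawn via Algorithm~\ref{gaussian_regression} using the rule $\Lambda^n$, holding $\beta_0$, $\sigma^2$, and $\pi$ fixed across rows. By construction the law of the $n$-th row coincides with $P_n$, so $P_n(D_n > c)$ equals the corresponding probability computed for the array, with $H_n$, $\hat{\beta}_n$, and $\X_n^\top\X_n$ identified with their triangular-array counterparts. This identification is the crux of the reduction: it lets the single-trajectory probabilities appearing in the corollary be read off directly from the triangular-array model to which Theorem~\ref{thm:bvm_bandit} applies.

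Next I would verify the hypotheses of Theorem~\ref{thm:bvm_bandit} for this array. Condition (i) there (each $x^n_j$ a basis vector) is immediate from condition (i) of the corollary, and the prior regularity (continuous, bounded, positive at $\beta_0$) holds for $\pi$ by the standing assumption on $\pi$. For condition (ii), I claim $\lambda_{\min}(\X_n^\top\X_n) \stackrel{p}{\to}\infty$ under the array: fix $M>0$; since $r_n\to\infty$ we have $r_n > M$ for all large $n$, so $\{\lambda_{\min}(\X_n^\top\X_n) \le M\}\subseteq\{\lambda_{\min}(\X_n^\top\X_n) \le r_n\}$, and condition (ii) of the corollary gives $P_n(\lambda_{\min}(\X_n^\top\X_n) \le M) \le P_n(\lambda_{\min}(\X_n^\top\X_n) \le r_n) < \epsilon_n \to 0$. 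Thus Theorem~\ref{thm:bvm_bandit} yields $D_n \stackrel{p}{\to} 0$ along the array, i.e. $P_n(D_n > c)\to 0$.

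Finally, combining this with the near-supremizer property gives $s_n < P_n(D_n > c) + 1/n \to 0$, so $\limsup_{n\to\infty} s_n = 0$, which is the claim. The main obstacle is not analytic but organizational: correctly identifying the chosen sequence of sampling rules $\Lambda^n$ with the rows of a genuine triangular array (so that the probabilities translate verbatim) and converting the uniform high-probability eigenvalue lower bound of condition (ii)---with $r_n\to\infty$ and $\epsilon_n\to0$---into the convergence-in-probability hypothesis $\lambda_{\min}(\X_n^\top\X_n)\stackrel{p}{\to}\infty$ demanded by Theorem~\ref{thm:bvm_bandit}. Once this bookkeeping is in place the conclusion follows with no fresh estimates beyond those already established.
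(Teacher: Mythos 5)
Your proposal is correct and is essentially the paper's own argument: the paper proves the corollary by contradiction, extracting for infinitely many $n$ a sampling rule $\Lambda^n$ with $P(D_n>c)\geq k$ and noting this contradicts Theorem~\ref{thm:bvm_bandit}, which is just the contrapositive of your direct near-supremizer reduction to the same triangular-array theorem. If anything, your write-up is more complete, since you explicitly verify the hypothesis $\lambda_{\min}(\X_n^\top\X_n)\stackrel{p}{\to}\infty$ along the assembled array from $r_n\to\infty$ and $\epsilon_n\to 0$, a step the paper's proof leaves implicit.
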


Theorem \ref{thm:bvm_bandit} implies that the interval $[\hat{\mu}_i^{(n)} \pm z_{1-\alpha/2} \sigma N_{i,n}^{-1/2}]$ is an asymptotically valid Bayesian credible interval. Note that this matches the Wald-type frequentist interval which one uses in i.i.d. settings where the MLE is indeed normal. But \cite{zhang2020inference} shows that for Thompson sampling in the two-arm batched bandit setting, the distribution of the sample means is not asymptotically normal in the case when the true arm means are equal.
Thus, Theorem~\ref{thm:bvm_bandit} implies that in this setting the credible interval will fail to be asymptotically frequentist-valid despite asymptotically matching the usual Wald-type frequentist confidence interval.

We can generalize this result beyond Gaussian bandits to exponential family bandits, i.e., where we still constrain $x_j$ in Algorithm~\ref{gaussian_regression} to be a basis vector, but now instead of $y_j$ being sampled from a Gaussian with mean $x_j^\top\beta$, it is sampled from a exponential family (with density $\exp(\eta y_j - b(\eta)) h(y_j)$) with parameter $\eta=x_j^\top\beta$.
Exponential family models include many common distribution types such as Bernoulli, Poisson, Gamma, Beta, and Geometric distributions. For the remainder of our results in this paper, we return to the non-triangular array setting where $\Lambda$ is not allowed to depend on $n$.

\begin{theorem} \label{thm:bvm_exp}
    Let $\beta_0 \in \R^p$ be the true parameter value and $\beta_{0,i}$ be the $i$-th coordinate of $\beta_0$. Let $N_{n,i}$ be the number of times arm $i$ was pulled and $\bar{Y}_{n,i} = \frac{1}{N_{n,i}} \sum_{j=1}^n I[x_j = i] y_j$. Let the local MLE be $\hat{\beta}_{n,i} = \beta_{0,i} + \frac{\bar{Y}_{n,i} - b'(\beta_{0,i})}{b''(\beta_{0,i})}$ and the empirical Fisher information be $I_n = \diag\{N_{i,n} b''(\beta_{0,i})\}$. Suppose the exponential family version of Algorithm~\ref{gaussian_regression} satisfies the following properties:

    \begin{enumerate}[(i)]
        \item $\beta_{0,1},\ldots,\beta_{0,p}$ are in the interior of the natural parameter space.
        
        \item $\min_i N_{n,i} \stackrel{p}{\to} \infty$.

        \item $\pi(\cdot)$ has continuous and bounded density on $\R^p$ which is positive at $\beta_0$.
    \end{enumerate}
    
    Then

    \[
    \|\pi(\beta | H_n) - \n( \hat{\beta}_n, I_n^{-1})\|_{\mathrm{TV}} \stackrel{p}{\to} 0.
    \]
\end{theorem}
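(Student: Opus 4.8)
The plan is to reduce Theorem~\ref{thm:bvm_exp} to the Gaussian bandit result, Theorem~\ref{thm:bvm_bandit}, by a local quadratic (Laplace-style) approximation of the exponential-family log-likelihood around $\beta_0$. First I would write the log-likelihood for the trajectory. Because each $x_j$ is a basis vector, the data separate across arms: the contribution of arm $i$ is $\sum_{j : x_j = e_i} (\beta_i y_j - b(\beta_i)) = N_{n,i}(\beta_i \bar Y_{n,i} - b(\beta_i))$, so the posterior factorizes across coordinates. As in the Gaussian proof, the terms coming from $\Lambda$ do not involve $\beta$ and are absorbed into the normalizing constant, so adaptivity plays no role in the posterior. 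The idea is to show that, after centering and scaling, each coordinate's log-likelihood is uniformly close to the quadratic $-\tfrac12 N_{n,i} b''(\beta_{0,i})(\beta_i - \hat\beta_{n,i})^2$, which is exactly the log-density (up to constants) of the $\n(\hat\beta_{n,i}, (N_{n,i} b''(\beta_{0,i}))^{-1})$ target appearing in the statement.

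Next I would make the quadratic approximation precise. A second-order Taylor expansion of $\ell_i(\beta_i) := N_{n,i}(\beta_i \bar Y_{n,i} - b(\beta_i))$ about $\beta_{0,i}$ gives a linear term proportional to $N_{n,i}(\bar Y_{n,i} - b'(\beta_{0,i}))$ and a curvature term $-\tfrac12 N_{n,i} b''(\beta_{0,i})(\beta_i-\beta_{0,i})^2$, with a cubic remainder controlled by $b'''$. Completing the square on the quadratic part reproduces the local MLE $\hat\beta_{n,i}=\beta_{0,i}+(\bar Y_{n,i}-b'(\beta_{0,i}))/b''(\beta_{0,i})$ and the variance $(N_{n,i}b''(\beta_{0,i}))^{-1}$ from the theorem. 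By the law of large numbers applied within each arm, $\bar Y_{n,i}\con b'(\beta_{0,i})$ on the event $N_{n,i}\to\infty$, so $\hat\beta_{n,i}\con\beta_{0,i}$; condition~(ii), $\min_i N_{n,i}\con\infty$, delivers this consistency simultaneously for all $p$ coordinates. Condition~(i) ensures $\beta_{0,i}$ is interior so that $b$ is smooth with finite $b'',b'''$ in a neighborhood, legitimizing the Taylor expansion and giving a remainder that is $o_p(1)$ after the standard localization.

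I would then mirror the truncation argument of Theorem~\ref{thm:bvm_bandit}. Rescale within a local ellipsoid $\{\beta : \sqrt{N_{n,i}}\,|\beta_i-\beta_{0,i}|\le M_n\}$ with $M_n\to\infty$ slowly; inside this region the cubic remainder is $O_p(M_n^3/\sqrt{\min_i N_{n,i}})=o_p(1)$, so the likelihood is uniformly well approximated by the Gaussian density, and the bounded, continuous prior (condition~(iii), positive at $\beta_0$) is asymptotically constant over the shrinking ellipsoid and thus washes out exactly as in the Gaussian case. Outside the ellipsoid, I would show both the posterior mass and the target Normal mass vanish: the Normal tail is standard, and the posterior tail is controlled using strict concavity of the exponential-family log-likelihood (since $b''>0$ at an interior point) together with the boundedness of $\pi$, which is precisely why condition~(iii) strengthens the continuity hypothesis of Theorem~\ref{theorem:bvm} to include boundedness. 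Assembling the inside and outside estimates via the same unnormalized-$L^1$-to-TV bound used previously yields $\con0$ convergence in total variation.

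The main obstacle I expect is the tail (outside-ellipsoid) control for a general exponential family, where, unlike the exactly-Gaussian case, the log-likelihood is only approximately quadratic and $b$ may grow slowly or behave irregularly near the boundary of the natural parameter space. Handling this requires a genuinely global concavity argument rather than the closed-form Gaussian tail bound: I would argue that the per-arm log-likelihood, being strictly concave in $\beta_i$ with maximum at the exact (not local) MLE, decays at least linearly away from its peak at a rate growing with $N_{n,i}$, so that multiplying by the bounded prior still leaves negligible integrated mass far from $\beta_0$. Making this uniform across all coordinates while only assuming $\min_i N_{n,i}\con\infty$, and reconciling the exact MLE used implicitly for concavity with the local MLE $\hat\beta_n$ stated in the theorem (they differ by a negligible amount under consistency), is the delicate step that the proof must handle carefully.
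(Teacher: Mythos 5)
Your architecture matches the paper's proof of Theorem~\ref{thm:bvm_exp} almost exactly: per-arm factorization of the likelihood (with the $\Lambda$ terms absorbed into the normalizer), a second-order Taylor expansion of $N_{n,i}(\beta_i\bar Y_{n,i}-b(\beta_i))$ about $\beta_{0,i}$ with a $b'''$-controlled cubic remainder, completion of the square to recover $\hat\beta_{n,i}$ and the variance $(N_{n,i}b''(\beta_{0,i}))^{-1}$, truncation to a shrinking coordinate-wise box (your ellipsoid $\{\sqrt{N_{n,i}}\,|\beta_i-\beta_{0,i}|\le M_n\}$ is the paper's $C_n$ with $c_{n,i}=M_n N_{n,i}^{-1/2}$), the prior washing out via the $\sup/\inf$ ratio over the shrinking set together with the unnormalized-$L^1$-to-TV bound (Lemma~\ref{lemma:TV}), and a concavity-based tail bound enabled by boundedness of $\pi$ --- you correctly identify why condition \emph{(iii)} strengthens Theorem~\ref{theorem:bvm}'s continuity hypothesis. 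One simplification you missed: your worry about reconciling the exact MLE with the local MLE in the tail step is unnecessary, because the paper anchors the radial comparison at $\beta_0$ rather than at any maximizer, using the concavity chord bound $\ell_n(\beta_0+(v-\beta_0)t)\le \ell_n(\beta_0)+t\,(\ell_n(v)-\ell_n(\beta_0))$ for $t\ge 1$ to reduce the outside-to-inside likelihood-mass ratio to an incomplete-gamma ratio governed by $a_n=\inf_{v\in\partial C_n}\ell_n(\beta_0)-\ell_n(v)$; no MLE enters that argument.

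The one genuine gap is your consistency step. You invoke the plain law of large numbers to get $\bar Y_{n,i}\con b'(\beta_{0,i})$, but rate-free consistency cannot support your own localization: the radii must simultaneously satisfy $N_{n,i}^{1/3}c_{n,i}\con 0$ (to kill the cubic remainder) and $|\hat\beta_{n,i}-\beta_{0,i}|/c_{n,i}\con 0$ (so that the box captures the posterior mode and $a_n\con\infty$ in the tail bound), which forces $|\hat\beta_{n,i}-\beta_{0,i}|=o_p(N_{n,i}^{-1/3})$ --- strictly stronger than consistency, and nontrivial here because $N_{n,i}$ is a random, adaptively determined index at which a CLT or $O_p(N^{-1/2})$ statement does not automatically hold. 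The paper resolves this by serializing each arm's pulls and applying a law-of-the-iterated-logarithm bound (Lemma~\ref{lem:uniform_clt}) to obtain $\bar Y_{i,(N)}-b'(\beta_{0,i})=o(N^{-1/2+\epsilon})$ \emph{almost surely, uniformly in the index}, so the bound transfers to the random count $N_{n,i}$; the radii are then chosen with $N_{n,i}^{1/2-\epsilon}c_{n,i}\con\infty$ and $N_{n,i}^{1/3}c_{n,i}\con 0$. This uniform-over-random-index device is also exactly why the paper notes Theorem~\ref{thm:bvm_exp} does not extend to triangular arrays --- a constraint your ``standard localization'' phrasing obscures. With Lemma~\ref{lem:uniform_clt} substituted for the bare LLN, your proof goes through essentially as in the paper.
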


The proof of Theorem \ref{thm:bvm_exp} appears in Appendix \ref{proof:bvm_exp}. The local MLE defined above is the maximizer of the second order Taylor expansion of the log-likelihood around $\beta_0$, and it is asymptotically equivalent to the true MLE by the asymptotic equivalence of the log-likelihood to its second-order Taylor expansion in a local neighborhood of the true parameter value. Note that the above theorem requires the true parameters to be in the interior of the natural parameter space, meaning for extremes of common models such as Poisson with rate near $0$ or Bernoulli with probability near $1$, the conclusion of the theorem may be a bad approximation for finite $n$.

The proof of this theorem is similar to that of Theorem \ref{theorem:bvm}, but here we require a Taylor expansion to express the likelihood as a function proportional to a Gaussian density. The bandit setting is particularly nice for performing this expansion since the likelihood factors into the product of the likelihoods for each individual arm. Thus, even if the eigenvalues of $I_n$ grow at asymptotically different rates, the likelihood is still well-approximated by the second-order expansion. This argument is harder when generalizing beyond bandits, as argued in Appendix \ref{section:parametric}. Additionally, to argue that we can truncate the posterior distribution to a local neighborhood, we use the convexity of $b(\cdot)$ to bound the tails of the likelihood function. Our proof does not necessarily generalize to triangular arrays as it relies on uniform bounds on a serialized sequence of samples, such as the one provided by the law of iterated logarithms.

\section{BvM on Gaussian Linear Bandits}\label{context}

Another special case for our BvM theorem is the case of Gaussian linear bandits, in which a context $x_j$ is observed for each arm pull $a_j$, impacting the resulting reward distribution through a linear transformation, i.e. $y_j \sim \n(x_j^\top \theta_{a_j}, \sigma^2)$ where $\theta_1,\ldots,\theta_m$ are parameter vectors. To model contextual bandits with our adaptive linear Gaussian data process in Algorithm~\ref{gaussian_regression}, we will let the parameter be $\beta \in \R^{md}$ where the $(id-d+1)$--$(id)$th indices of $\beta$ represent $\theta_i$; in other words, we stack the parameter vectors $\theta_1,\ldots,\theta_m$ vertically. Then, when we observe context $x' \in \R^d$ and action $i \in \{1,\ldots,m\}$, we sample the outcome from $\n(\beta^\top x, \sigma^2)$ where the $(id-d+1)$--$(id)$th index of $x \in \R^{md}$ represents $x'$. Then, we can state BvM in the contextual bandit setting as follows.

\begin{theorem} \label{thm:contextual}
    Suppose $p = md$ and we decompose the covariate space as \[\mathcal{X} := \left(\R^d \times \{\boldsymbol{0}_d\}  \times \cdots \times \{\boldsymbol{0}_d\}\right) \cup \left(\{\boldsymbol{0}_d\} \times \R^d  \times \cdots \times \{\boldsymbol{0}_d\}\right) \cup \cdots \cup \left(\{\boldsymbol{0}_d\}  \times \{\boldsymbol{0}_d\} \times \cdots \times \R^d\right) \] 
    Assume the sampling procedure in Algorithm~\ref{gaussian_regression} satisfies the following conditions:

    \begin{enumerate}[(i)]
        \item Each action $x_j$ falls in $\mathcal{X}$.

        \item For all $i=1,\ldots,m$, let $T_i:\mathcal{X} \to \mathbb{R}^d$ be a projection onto the $(id-d+1)$--$(id)$th coordinates. Letting $I_{n,i} = \sum_{j=1}^n T_i(x_j) T_i(x_j)^\top \in \R^{d \times d}$, for all $i$ we have

        \[
        \lambda_{\mathrm{min}}(I_{n,i}) \stackrel{p}{\to} \infty \text{ and } \log \lambda_{\mathrm{max}}(I_{n,i}) = o_p(\lambda_{\mathrm{min}}(I_{n,i})).
        \]

        \item $\pi(\cdot)$ is continuous with bounded density on $\R^p$ and positive density at $\beta_0$. 
    \end{enumerate}

    Then, the posterior distribution $\pi(\beta | H_n)$ satisfies

    \[
    \|\pi(\beta | H_n) - \n(\hat{\beta}_n, \sigma^2 (\X_n^\top \X_n)^{-1})\|_{\mathrm{TV}} \stackrel{p}{\to} 0.
    \]
\end{theorem}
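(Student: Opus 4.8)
The plan is to exploit the block structure that condition (i) forces on the problem. Because every action $x_j$ lies in $\mathcal{X}$, each $x_j$ is supported on a single length-$d$ block, so $x_j x_j^\top$ is nonzero only in the corresponding diagonal block and hence $\X_n^\top\X_n = \diag(I_{n,1},\ldots,I_{n,m})$ is block diagonal. Three consequences follow immediately and drive the entire argument: the MLE splits as $\hat\beta_n = (\hat\theta_1,\ldots,\hat\theta_m)$ with $\hat\theta_i = I_{n,i}^{-1}\sum_{j} T_i(x_j) y_j$; the likelihood factors as a product over blocks of Gaussian kernels in $\theta_i$; and the target normal factors as $\n(\hat\beta_n,\sigma^2(\X_n^\top\X_n)^{-1}) = \bigotimes_{i=1}^m \n(\hat\theta_i, \sigma^2 I_{n,i}^{-1})$. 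The point of the theorem is that the \emph{global} condition of Theorem~\ref{theorem:bvm}, namely $\log\lambda_{\max}(\X_n^\top\X_n) = o_p(\lambda_{\min}(\X_n^\top\X_n))$, can fail here (one arm pulled $\Theta(\log n)$ times and another $\Theta(n)$ times already breaks it), so I cannot simply invoke Theorem~\ref{theorem:bvm}; instead the per-block version of condition (i) must be applied block by block.

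First I would establish consistency $\hat\beta_n \stackrel{p}{\to}\beta_0$. Writing $\hat\beta_n - \beta_0 = (\X_n^\top\X_n)^{-1}\X_n^\top\epsilon$ and using block diagonality, it suffices to show $\hat\theta_i\stackrel{p}{\to}\theta_{0,i}$ for each $i$ separately, where $\theta_{0,i}$ is the $i$-th block of $\beta_0$. The observations collected whenever arm $i$ is pulled form, with respect to the full data filtration, their own adaptive linear Gaussian process in $\R^d$ (the $k$-th such covariate is measurable with respect to the history preceding that pull, and the corresponding noise is independent $\n(0,\sigma^2)$), so this is exactly the setting of condition (i) of Theorem~\ref{theorem:bvm} applied within block $i$. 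A self-normalized martingale bound in the spirit of \cite{lai1982least} gives $(\hat\theta_i-\theta_{0,i})^\top I_{n,i}(\hat\theta_i-\theta_{0,i}) = O_p(\log\lambda_{\max}(I_{n,i}))$, whence $\|\hat\theta_i - \theta_{0,i}\|^2 \le \lambda_{\min}(I_{n,i})^{-1} O_p(\log\lambda_{\max}(I_{n,i})) = o_p(1)$ by condition (ii). This is the step where the per-block $\log\lambda_{\max}$ hypothesis is genuinely needed: globally it may fail, but within each block it holds.

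With consistency in hand, the remaining argument is the standard bounded-prior estimate and needs no further eigenvalue control. The covariance $\sigma^2(\X_n^\top\X_n)^{-1}$ has operator norm $\sigma^2/\min_i\lambda_{\min}(I_{n,i})\stackrel{p}{\to}0$, so the random normal $\phi_n := \n(\hat\beta_n,\sigma^2(\X_n^\top\X_n)^{-1})$ converges weakly in probability to the point mass at $\beta_0$. Since the posterior density is $\pi(\beta\mid H_n) = \pi(\beta)\phi_n(\beta)/Z_n$ with $Z_n = \int\pi\, d\phi_n$, and $\pi$ is bounded and continuous, weak convergence yields $Z_n\stackrel{p}{\to}\pi(\beta_0)>0$. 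I would then write
\[
\|\pi(\cdot\mid H_n) - \phi_n\|_{\mathrm{TV}} = \tfrac12\int \phi_n(\beta)\left|\frac{\pi(\beta)}{Z_n}-1\right| d\beta,
\]
and split the integral at a small ball $B_\delta$ around $\beta_0$: on $B_\delta$, continuity of $\pi$ together with $Z_n\to\pi(\beta_0)$ makes the integrand uniformly small, while on $B_\delta^c$ the factor $|\pi/Z_n - 1|$ is bounded (prior bounded, $Z_n$ bounded below) and $\phi_n(B_\delta^c)\stackrel{p}{\to}0$ by concentration. Both pieces vanish in probability.

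The main obstacle is the per-block consistency of the second paragraph: one must verify that restricting to the (random) subsequence of times at which arm $i$ is pulled yields a bona fide adaptive linear Gaussian process, so that the Lai--Wei self-normalized martingale machinery applies with the block Gram matrix $I_{n,i}$ in place of $\X_n^\top\X_n$, and that the bound $O_p(\log\lambda_{\max}(I_{n,i}))$ survives the random number of pulls. Everything downstream---factorization, concentration, and the bounded-prior TV estimate---is routine once this block-wise consistency is secured.
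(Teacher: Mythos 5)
Your proposal is correct and follows essentially the same route as the paper: the paper likewise reduces everything to consistency of $\hat{\beta}_n$, obtained by applying Theorem 1 of \cite{lai1982least} blockwise via condition (ii) (your per-block self-normalized martingale bound is exactly this, and your ``random subsequence'' worry dissolves since one can keep all $n$ time indices with covariate $T_i(x_j)=\boldsymbol{0}_d$ when arm $i$ is not pulled), and then invokes the same bounded-continuous-prior argument as in the proof of Theorem~\ref{thm:bvm_bandit}. Your handling of the normalizer via $Z_n \stackrel{p}{\to} \pi(\beta_0)$ and a ball-splitting is only a cosmetic variant of the paper's one-sided TV bound $\E[(\pi(\kappa_n)/\pi(\beta_0)-1)_+ \mid H_n]$ combined with Vitali's convergence theorem.
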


The proof of Theorem \ref{thm:contextual} appears in Appendix \ref{proof:contextual}. Note that condition (ii) in Theorem \ref{thm:contextual} is different from condition \emph{(i)} in Theorem \ref{theorem:bvm} because it concerns the distribution of contexts conditioned on a particular arm, rather than the distribution of arm pulls. Thus, it is possible for a contextual bandit algorithm to satisfy condition (ii) when it pulls different arms at exponentially different rates. This result does not immediately generalize to triangular arrays, since we rely on consistency which may not hold in the triangular array setting for contextual bandits. However, the non-triangular array version of Theorem \ref{thm:bvm_bandit} is a special case of Theorem \ref{thm:contextual}.

\section{Numerical Experiments}\label{sec:simulations}

We discuss the empirical validity of the statement of Theorem \ref{theorem:bvm} in the setting of multi-armed bandits, contextual bandits, and the linear quadratic regulator (LQR). As seen in the previous section, the posterior distribution is asymptotically equivalent in TV distance to the normal distribution $\n(\hat{\beta}_n, \sigma^2 (\X_n^\top \X_n)^{-1})$, which we call the representative normal distribution. Although this equivalence holds asymptotically, the convergence rate to this representative normal distribution may be quite slow depending on the rate of growth of $\lambda_{\mathrm{min}}(\X_n^\top \X_n)$. We empirically perform posterior inference in three common adaptive settings and show that the posterior does empirically converge to the representative normal.

\begin{figure}
    \centering
    \includegraphics[width=0.49\linewidth]{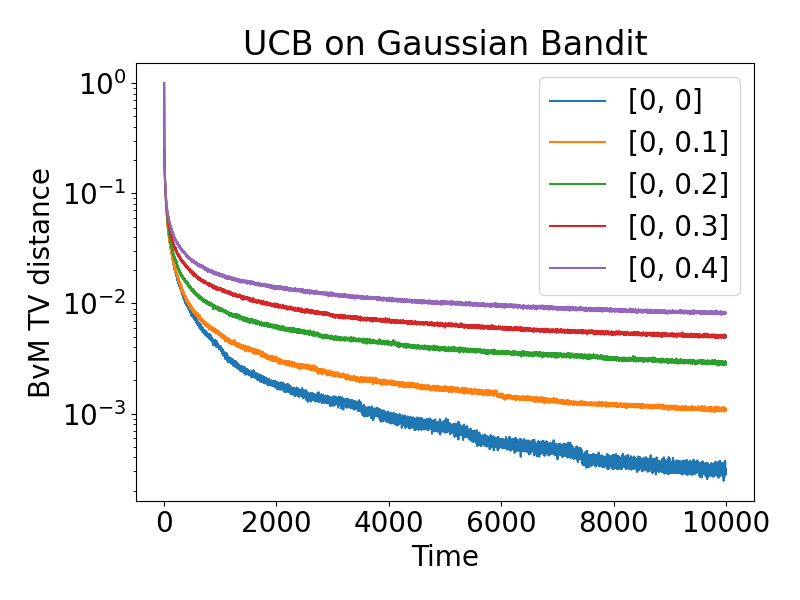}
    \includegraphics[width=0.49\linewidth]{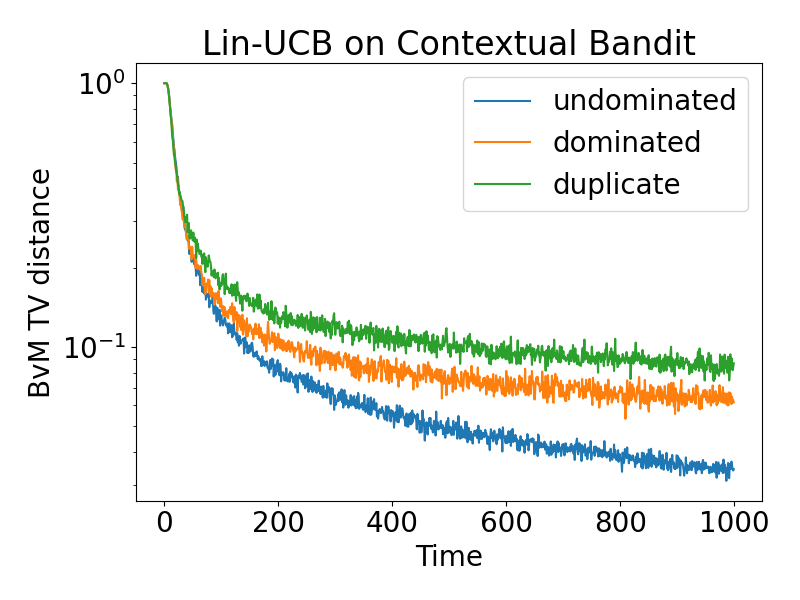}

    \caption{(Left) Average TV distance measured in the BvM statement for UCB in two-arm Gaussian bandits over horizon $T=10^4$ using $10^4$ replicates under five different true parameter configurations labelled by $[\mu_1, \mu_2]$ where $\mu_1,\mu_2$ are the true means. (Right) Average TV distance measured in the BvM statement for lin-UCB on three-arm Gaussian linear contextual bandits with context distribution $\n(0, I_{2 \times 2})$ under three different true parameter configurations. Standard Gaussian priors are used for all arms. TV estimates shown have standard error at most $0.1$ times the TV estimate.}
    \label{fig:experiments1}
\end{figure}

\begin{figure}
    \centering
    \includegraphics[width=0.49\linewidth]{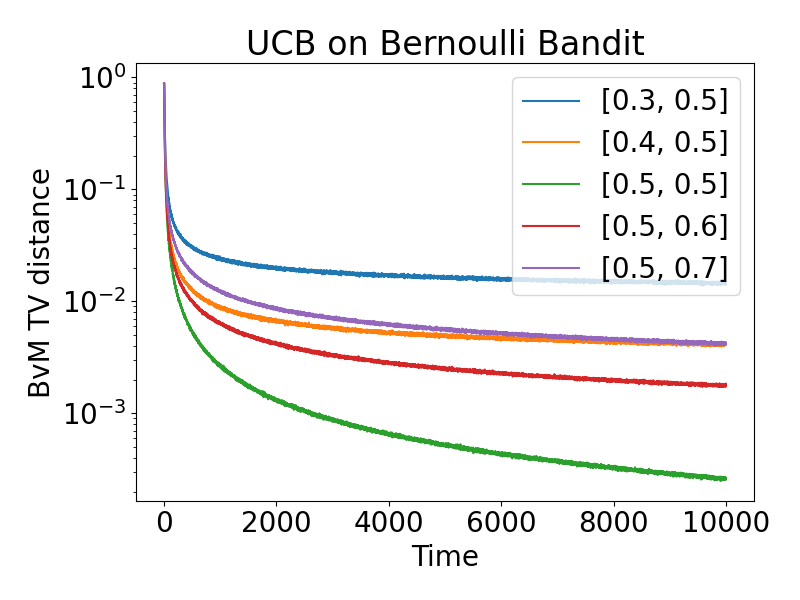}
    \includegraphics[width=0.49\linewidth]{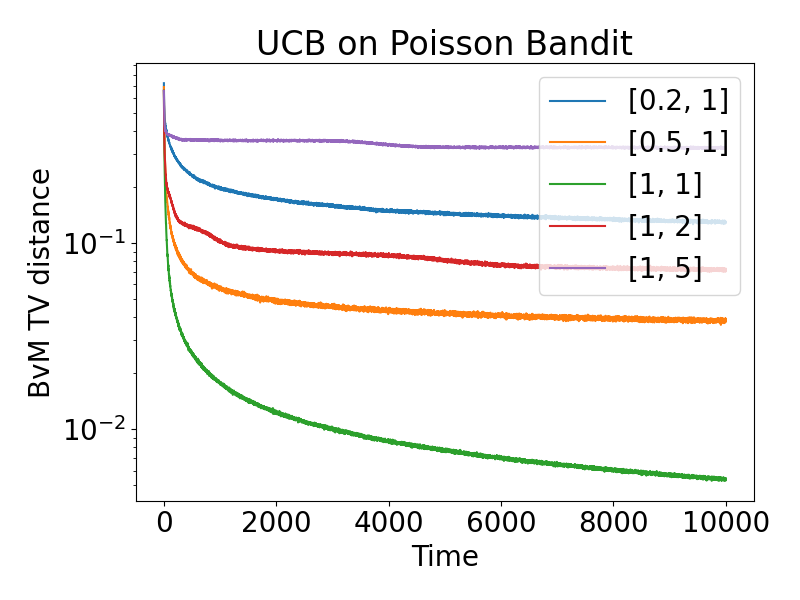}

    \caption{Average BvM TV distance for UCB on Bernoulli bandits and Poisson bandits, under the same configurations as Figure \ref{fig:experiments1}. $\text{Beta}(1, 1)$ priors are used for the Bernoulli bandit and $\text{Gamma}(1, 1)$ priors for the Poisson bandit. The representative normal is centered at the true MLE, which is asymptotically equivalent to the local MLE used in Theorem \ref{thm:bvm_exp}. TV estimates shown have standard error at most $0.1$ times the TV estimate.}
    \label{fig:experiments2}
\end{figure}

We use Monte Carlo and the relation $\|P - Q\|_{\mathrm{TV}} = \E_{X \sim P}\left[\max\left(0, 1-\frac{Q(X)}{P(X)}\right)\right]$ to approximate the TV distance between the posterior distribution and the representative normal distribution. As seen in Figures \ref{fig:experiments1}, \ref{fig:experiments2}, and \ref{fig:experiments3}, the convergence rate of the TV distance depends on the configuration of the true parameters. Note that in the multi-armed bandit setting, the convergence seems fastest when the arm means are equal and becomes slower as the margin increases. One explanation for this is that the result of Theorem \ref{thm:bvm_bandit} requires the condition $\lambda_{\mathrm{min}}(\X_n^\top \X_n) \stackrel{p}{\to} \infty$, suggesting that the rate at which this minimum eigenvalue grows determines the rate of convergence of the TV distance. More specifically, we see that in our proof of Theorem \ref{thm:bvm_bandit}, we use the expression $\E\left[ \left( \frac{\pi(\kappa_n)}{\pi(\beta_0)} - 1 \right)_+ \right]$ as an upper bound on the TV distance where $\kappa_n | H_{m_n}^n \sim \n(\hat{\beta}_n, \sigma^2 (\X_n^\top \X_n)^{-1})$. Thus, we see that the rate at which $\lambda_{\mathrm{min}}(\X_n^\top \X_n)$ diverges affects the convergence rate of $\kappa_n \stackrel{p}{\to} \beta_0$, which in turn affects the convergence $\frac{\pi(\kappa_n)}{\pi(\beta_0)} - 1 \stackrel{p}{\to} 0$. This explains why the TV distance converges fastest for the zero margin setting (even though that setting does not satisfy stability). 

We simulate the algorithm Lin-UCB in the setting of contextual bandits under three different parameter configurations \cite{li2010contextual}. The ``undominated'' configuration represents a case where each arm is optimal for some choice of context. In the ``dominated'' case, one arm is never optimal for any choice of context, and in the ``duplicate'' case, two arms share the same parameters. In Figure \ref{fig:experiments1}, we empirically see that the convergence of the posterior to the representative normal is fastest for the undominated configuration, possibly because in this case, the optimal policy samples each arm with probability bounded away from zero. In the configuration where two arms are duplicate, there may be substantial bias in the estimation of the arm parameters \cite{nie2018adaptively}, leading the ``duplicate'' case to have the slowest convergence as seen in Figure \ref{fig:experiments1}.

We can also model the Linear Quadratic Regulator (LQR) \cite{mehrmann1991autonomous}, a common setting used in control theory, using our adaptive linear Gaussian framework. In LQR, we control a state transition of the form $x_{j+1} = Ax_j + Bu_j + \epsilon_j$ where $A \in \R^{k \times k}, B \in \R^{k \times d}$, $u_j$ are adaptively chosen actions, $\epsilon_j \sim \n(0, \sigma^2 I_k)$ for $j=1,\ldots,n$, and we aim to estimate the transition matrices $A,B$. To represent this as an instance of Algorithm \ref{gaussian_regression}, we serialize the observed state vectors $x_j$ into the sequence of observed outcome variables---that is, we use a trajectory of length $\tilde{n} = nk$ where $\tilde{y}_{(i-1)k+1},\ldots,\tilde{y}_{ik}$ represents $x_i$. We let the parameter value $\beta$ be $\begin{pmatrix}
    A & B
\end{pmatrix}$ in row major order. Then, the $((i-1)(k+d)+1)$--$(i(k+d))$ indices of the covariate $\tilde{x}_{(j-1)k+i} \in \R^{k(k+d)}$ represent $\begin{pmatrix}
    x_j \\ u_j
\end{pmatrix}$ and all other entries are zero.

We simulate the Noisy Certainty Equivalent Control (NCEC) algorithm on LQR under three different configurations \cite{wang2021exact}.
The ``determined'' configuration represents a case where the action space has the same dimensionality as the state space and the action transition matrix is full rank. The ``stabilizable'' case is underdetermined with fewer action dimensions than state dimensions but where the optimal policy allows the system to be stable. The ``unstabilizable'' case is underdetermined where no policy makes the system stable. In all settings, we empirically see that the BvM TV distance decreases over time, with the convergence rate being the fastest for the unstabilizable case as it has the highest growth rate of $\lambda_{\mathrm{min}}(\X_n^\top \X_n)$. Finally, as we show in Proposition~\ref{lqr-conditions} in Appendix~\ref{app:aux}, the NCEC algorithm satisfies condition \emph{(i)} in Theorem \ref{theorem:bvm} so long as a certain stability condition holds (see Assumption~\ref{stability-ass} in the same appendix section for a precise definition); both our ``determined'' and ``stabilizable'' simulation settings satisfy this condition.

We also simulate Thompson Sampling in the two-stage Gaussian batched bandit with two arms, which is the setting analyzed in Theorem 2 of \cite{zhang2020inference}. We compute the TV distance between the posterior and the representative normal in the same way for different values of the margin. As shown in Figure \ref{fig:experiments3}, we again see that the average BvM distance is smallest when the margin is zero. We also plot the empirical coverage of the $95\%$ credible interval for the margin, which matches the coverage level except near the zero-margin case, as suggested by \cite{zhang2020inference}. This is an example of a fairly well-behaved sampling process where our BvM result applies but Bayesian credible intervals are still not asymptotically frequentist-valid. Note that this setting satisfies the stability condition if and only if the margin is nonzero. This suggests that the asymptotic frequentist invalidity of Bayesian credible intervals is a local phenomenon around the zero-margin case. Figure \ref{fig:experiments3} supports this analysis and also reveals that in this local region, some parameter values lead to overcoverage and some to undercoverage from a frequentist perspective. Our BvM result may provide an explanation for this, as the Bayesian coverage under a prior containing this local zero-margin region must be correct, meaning the coverage probability aggregated over the erroneous local region should match the coverage level. 

Appendix \ref{section:zozo} contains a demonstration of our main results on a simple real-world dataset. Coverage plots for the other simulations are shown in Appendix \ref{section:plots}.
 
\begin{figure}
    \centering
    \includegraphics[width=0.49\linewidth]{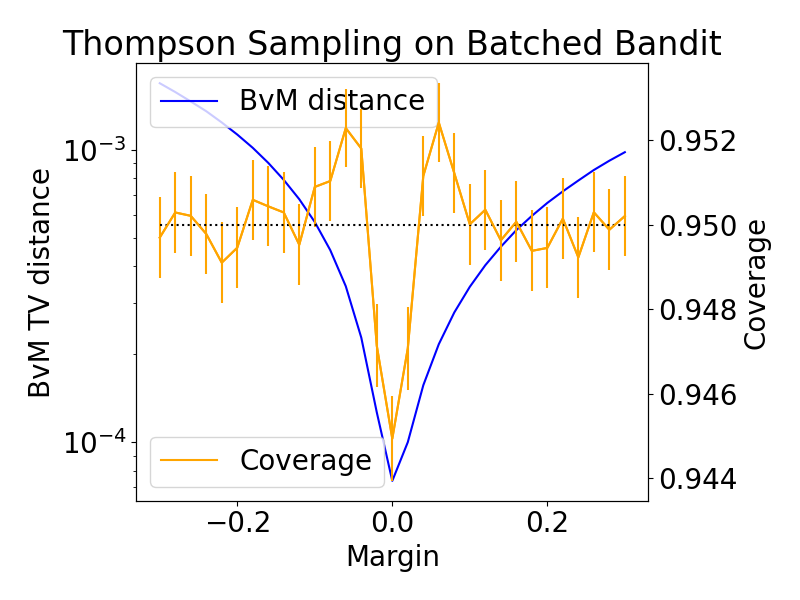}
    \includegraphics[width=0.49\linewidth]{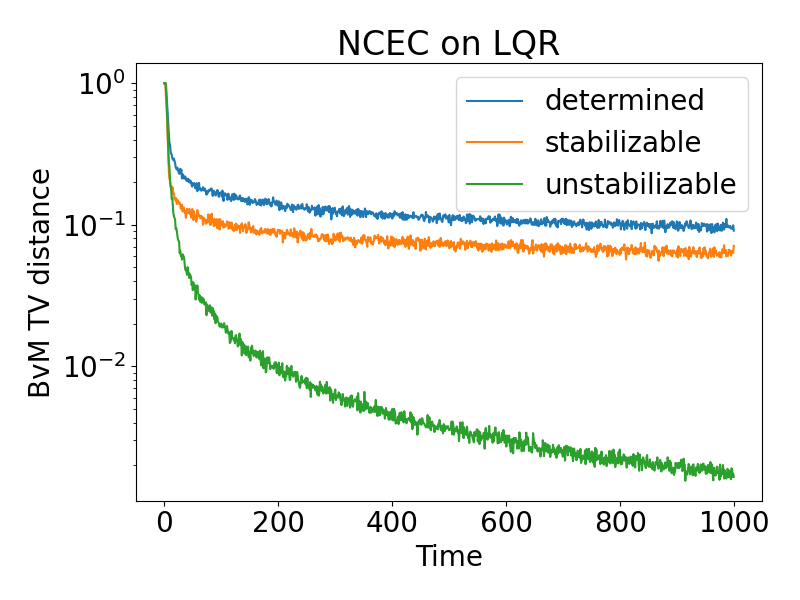}

    \caption{(Left) Average BvM TV distance and empirical coverage of the $95\%$ credible interval for the margin for Thompson Sampling in the two-batch two-arm Gaussian bandit setting with $10^4$ samples per batch. Error bars are $95\%$ confidence intervals over $2 \times 10^5$ replicates. Blacked dotted line is the correct coverage level. $\n(0, 1)$ priors are used. (Right) Average TV distance for Noisy Certainty Equivalent Control on LQR \cite{wang2021exact} under three different parameter configurations. Standard Gaussian priors are used for all arms. TV estimates shown have standard error at most $0.2$ times the TV estimate.}
    \label{fig:experiments3}
\end{figure}

\section{Discussion and Future Work}

This paper has shown that, for a number of important classes of adaptively collected data, a Bernstein--von Mises theorem applies, linking Bayesian UQ and Wald-type frequentist UQ. This ensures that under extremely mild conditions, Bayesian UQ is asymptotically Bayesian-valid even when the prior is misspecified, and when the stability condition of \cite{lai1982least} holds, it also ensures Bayesian UQ is asymptotically frequentist-valid.

One of the surprising takeaways from our work is that when the stability condition of \cite{lai1982least} fails, BvM holds but Bayesian UQ is asymptotically frequentist invalid. We note that our work, however, only considers the case when a \emph{fixed, nonrandom} prior is used for Bayesian UQ. This raises the question of whether there is a \emph{data-dependent} way to set the prior so that Bayesian UQ is always asymptotically frequentist valid; we could think of such a method as \emph{empirical} Bayesian UQ.

Another direction of inquiry would be to extend the BvM result to general parametric models beyond just the Gaussian and exponential family cases. However, there are several obstacles to showing the adaptive BvM result in more general parametric models which we discuss in Appendix \ref{section:parametric}.

Note that this paper does not suggest a method of asymptotically frequentist-valid Bayesian inference; in fact, we would like to warn practitioners against using either Wald-type frequentist UQ or Bayesian UQ as if it were asymptotically frequentist-valid.

\textbf{Declaration of LLM usage:}

ChatGPT-4o was used to create code templates for a Python implementation of the lin-UCB and Stepwise Noisy Certainty Equivalent Control algorithms. The authors revised the templates to ensure correct implementation and modified them to verify the BvM statement.

\textbf{Broader Impact:}

The results presented here have the potential to guide the design of safer systems and more reliable hypothesis testing in adaptive experiments. However, it should be noted that the Bernstein--von Mises theorem does not immediately imply frequentist-valid inference as discussed. Thus, Bayesian UQ should \textit{not} in general be treated as frequentist-valid. Instead, our result contributes to a more complete understanding of the differences between frequentist and Bayesian approaches in adaptively collected data.

\bibliography{references}

@article{berkenkamp2017safe,
  title={Safe model-based reinforcement learning with stability guarantees},
  author={Berkenkamp, Felix and Turchetta, Matteo and Schoellig, Angela and Krause, Andreas},
  journal={Advances in neural information processing systems},
  volume={30},
  year={2017}
}

@article{lai1982least,
  title={Least squares estimates in stochastic regression models with applications to identification and control of dynamic systems},
  author={Lai, Tze Leung and Wei, Ching Zong},
  journal={The Annals of Statistics},
  pages={154--166},
  year={1982},
  publisher={JSTOR}
}

@article{hadad2021confidence,
  title={Confidence intervals for policy evaluation in adaptive experiments},
  author={Hadad, Vitor and Hirshberg, David A and Zhan, Ruohan and Wager, Stefan and Athey, Susan},
  journal={Proceedings of the national academy of sciences},
  volume={118},
  number={15},
  pages={e2014602118},
  year={2021},
  publisher={National Academy of Sciences}
}

@article{auer2002finite,
  title={Finite-time analysis of the multiarmed bandit problem},
  author={Auer, Peter and Cesa-Bianchi, Nicolo and Fischer, Paul},
  journal={Machine learning},
  volume={47},
  pages={235--256},
  year={2002},
  publisher={Springer}
}

@inproceedings{kaufmann2012thompson,
  title={Thompson sampling: An asymptotically optimal finite-time analysis},
  author={Kaufmann, Emilie and Korda, Nathaniel and Munos, R{\'e}mi},
  booktitle={International conference on algorithmic learning theory},
  pages={199--213},
  year={2012},
  organization={Springer}
}

@book{van2000asymptotic,
  title={Asymptotic statistics},
  author={Van der Vaart, Aad W},
  volume={3},
  year={2000},
  publisher={Cambridge university press}
}

@article{kleijn2012bernstein,
  title={The Bernstein-von-Mises theorem under misspecification},
  author={Kleijn, Bas JK and Van der Vaart, Aad W},
  year={2012}
}

@techreport{connault2014weakly,
  title={A weakly dependent bernstein--von mises theorem},
  author={Connault, Benjamin},
  year={2014},
  institution={Working Paper}
}

@article{zhang2020inference,
  title={Inference for batched bandits},
  author={Zhang, Kelly and Janson, Lucas and Murphy, Susan},
  journal={Advances in neural information processing systems},
  volume={33},
  pages={9818--9829},
  year={2020}
}

@inproceedings{li2010contextual,
  title={A contextual-bandit approach to personalized news article recommendation},
  author={Li, Lihong and Chu, Wei and Langford, John and Schapire, Robert E},
  booktitle={Proceedings of the 19th international conference on World wide web},
  pages={661--670},
  year={2010}
}

@article{chapelle2011empirical,
  title={An empirical evaluation of thompson sampling},
  author={Chapelle, Olivier and Li, Lihong},
  journal={Advances in neural information processing systems},
  volume={24},
  year={2011}
}

@article{aguilera2020mhealth,
  title={mHealth app using machine learning to increase physical activity in diabetes and depression: clinical trial protocol for the DIAMANTE Study},
  author={Aguilera, Adrian and Figueroa, Caroline A and Hernandez-Ramos, Rosa and Sarkar, Urmimala and Cemballi, Anupama and Gomez-Pathak, Laura and Miramontes, Jose and Yom-Tov, Elad and Chakraborty, Bibhas and Yan, Xiaoxi and others},
  journal={BMJ open},
  volume={10},
  number={8},
  pages={e034723},
  year={2020},
  publisher={British Medical Journal Publishing Group}
}

@article{yom2017encouraging,
  title={Encouraging physical activity in patients with diabetes: intervention using a reinforcement learning system},
  author={Yom-Tov, Elad and Feraru, Guy and Kozdoba, Mark and Mannor, Shie and Tennenholtz, Moshe and Hochberg, Irit},
  journal={Journal of medical Internet research},
  volume={19},
  number={10},
  pages={e338},
  year={2017},
  publisher={JMIR publications Toronto, Canada}
}

@article{rafferty2019statistical,
  title={Statistical consequences of using multi-armed bandits to conduct adaptive educational experiments},
  author={Rafferty, Anna and Ying, Huiji and Williams, Joseph and others},
  journal={Journal of Educational Data Mining},
  volume={11},
  number={1},
  pages={47--79},
  year={2019}
}

@book{gut2009stopped,
  title={Stopped random walks},
  author={Gut, Allan},
  year={2009},
  publisher={Springer}
}

@article{yu2021reinforcement,
  title={Reinforcement learning in healthcare: A survey},
  author={Yu, Chao and Liu, Jiming and Nemati, Shamim and Yin, Guosheng},
  journal={ACM Computing Surveys (CSUR)},
  volume={55},
  number={1},
  pages={1--36},
  year={2021},
  publisher={ACM New York, NY}
}

@article{kober2013reinforcement,
  title={Reinforcement learning in robotics: A survey},
  author={Kober, Jens and Bagnell, J Andrew and Peters, Jan},
  journal={The International Journal of Robotics Research},
  volume={32},
  number={11},
  pages={1238--1274},
  year={2013},
  publisher={SAGE Publications Sage UK: London, England}
}

@article{bhatt2016adaptive,
  title={Adaptive designs for clinical trials},
  author={Bhatt, Deepak L and Mehta, Cyrus},
  journal={New England Journal of Medicine},
  volume={375},
  number={1},
  pages={65--74},
  year={2016},
  publisher={Mass Medical Soc}
}

@article{wang2021exact,
  title={Exact asymptotics for linear quadratic adaptive control},
  author={Wang, Feicheng and Janson, Lucas},
  journal={Journal of Machine Learning Research},
  volume={22},
  number={265},
  pages={1--112},
  year={2021}
}

@article{nair2023randomization,
  title={Randomization tests for adaptively collected data},
  author={Nair, Yash and Janson, Lucas},
  journal={arXiv preprint arXiv:2301.05365},
  year={2023}
}

@inproceedings{deshpande2018accurate,
  title={Accurate inference for adaptive linear models},
  author={Deshpande, Yash and Mackey, Lester and Syrgkanis, Vasilis and Taddy, Matt},
  booktitle={International Conference on Machine Learning},
  pages={1194--1203},
  year={2018},
  organization={PMLR}
}

@article{bibaut2021post,
  title={Post-contextual-bandit inference},
  author={Bibaut, Aur{\'e}lien and Dimakopoulou, Maria and Kallus, Nathan and Chambaz, Antoine and van Der Laan, Mark},
  journal={Advances in neural information processing systems},
  volume={34},
  pages={28548--28559},
  year={2021}
}

@article{luedtke2016statistical,
  title={Statistical inference for the mean outcome under a possibly non-unique optimal treatment strategy},
  author={Luedtke, Alexander R and Van Der Laan, Mark J},
  journal={Annals of statistics},
  volume={44},
  number={2},
  pages={713},
  year={2016}
}

@article{howard2021time,
  title={Time-uniform, nonparametric, nonasymptotic confidence sequences},
  author={Howard, Steven R and Ramdas, Aaditya and McAuliffe, Jon and Sekhon, Jasjeet},
  journal={The Annals of Statistics},
  volume={49},
  number={2},
  pages={1055--1080},
  year={2021},
  publisher={JSTOR}
}

@article{kaufmann2021mixture,
  title={Mixture martingales revisited with applications to sequential tests and confidence intervals},
  author={Kaufmann, Emilie and Koolen, Wouter M},
  journal={Journal of Machine Learning Research},
  volume={22},
  number={246},
  pages={1--44},
  year={2021}
}

@article{abbasi2011improved,
  title={Improved algorithms for linear stochastic bandits},
  author={Abbasi-Yadkori, Yasin and P{\'a}l, D{\'a}vid and Szepesv{\'a}ri, Csaba},
  journal={Advances in neural information processing systems},
  volume={24},
  year={2011}
}

@article{bochkina2014bernstein,
  title={The Bernstein--von Mises theorem and nonregular models},
  author={Bochkina, Natalia A and Green, Peter J},
  year={2014}
}

@article{bochkina2019bernstein,
  title={Bernstein--von Mises theorem and misspecified models: A review},
  author={Bochkina, Natalia},
  journal={Foundations of modern statistics},
  pages={355--380},
  year={2019},
  publisher={Springer}
}

@article{koers2023misspecified,
  title={Misspecified Bernstein-Von Mises theorem for hierarchical models},
  author={Koers, Geerten and Szab{\'o}, Botond and van der Vaart, Aad},
  journal={arXiv preprint arXiv:2308.07803},
  year={2023}
}

@inproceedings{nie2018adaptively,
  title={Why adaptively collected data have negative bias and how to correct for it},
  author={Nie, Xinkun and Tian, Xiaoying and Taylor, Jonathan and Zou, James},
  booktitle={International Conference on Artificial Intelligence and Statistics},
  pages={1261--1269},
  year={2018},
  organization={PMLR}
}

@book{mehrmann1991autonomous,
  title={The autonomous linear quadratic control problem: theory and numerical solution},
  author={Mehrmann, Volker Ludwig},
  year={1991},
  publisher={Springer}
}

@article{castillo2015bernstein,
  title={A Bernstein--von Mises theorem for smooth functionals in semiparametric models},
  author={Castillo, Isma{\"e}l and Rousseau, Judith},
  year={2015}
}

@article{saito2020large,
    title={Large-scale Open Dataset, Pipeline, and Benchmark for Bandit Algorithms},
    author={Saito, Yuta and Shunsuke Aihara and Megumi Matsutani and Yusuke Narita},
    journal={arXiv preprint arXiv:2008.07146},
    year={2020}
 }
\bibliographystyle{plain}



\appendix

\newpage

\section{Source Code} \label{appx:code}

The code for this project can be accessed at \href{https://github.com/TheDukeVin/BvM/tree/main}{https://github.com/TheDukeVin/BvM/tree/main}. Each figure's data takes under three hours of CPU time to generate.

\section{Main Theorems} \label{appx:main_proofs}

\subsection{Proof of Theorem \ref{theorem:bvm}} \label{proof:bvm}

We first write an explicit expression for the posterior. Note that the likelihood can be written as, after removing constant factors that don't depend on $\beta$,

    \begin{align*}
        L(\beta ; H_n) &= \left( \prod_{j=1}^n \Lambda(x_j | H_{j-1}) \right) \left( \prod_{j=1}^n P(y_j | x_j, \beta) \right) \\
        &\propto \prod_{j=1}^n P(y_j | x_j, \beta) \propto \prod_{j=1}^n \exp\left( - \frac{(y_j - x_j^\top \beta)^2}{2\sigma^2} \right) \\
        &= \prod_{j=1}^n \exp\left( -\frac{(y_j - \hat{\beta}_n^\top x_j)^2}{2\sigma^2} - \frac{(\hat{\beta}_n^\top x_j - \beta^\top x_j)^2}{2\sigma^2}  \right) \\
        &\propto \exp(-\frac{1}{2\sigma^2} (\hat{\beta}_n - \beta)^\top \X_n^\top \X_n (\hat{\beta}_n - \beta)).
    \end{align*}

    Thus, by Bayes' rule, the posterior can be written as

    \[
    \pi(\beta | H_n) \propto \pi(\beta) \exp(-\frac{1}{2\sigma^2} (\hat{\beta}_n - \beta)^\top \X_n^\top \X_n (\hat{\beta}_n - \beta)).
    \]

    Let $d(H_n) = \|\pi(\beta | H_n) - \n(\hat{\beta}_n, \sigma^2 (\X_n^\top \X_n)^{-1})\|_{\mathrm{TV}}$. By assumption, $\pi(\beta_0) > 0$, so using Lemma \ref{lemma:TV}, we have

    \begin{align*}
        d(H_n) &\leq \int \Bigr( \frac{\pi(\beta)}{\pi(\beta_0)} \frac{1}{\sqrt{(2\pi \sigma^2)^p |\X_n^\top \X_n|^{-1}}} \exp(-\frac{1}{2 \sigma^2} (\beta - \hat{\beta}_n)^\top \X_n^\top \X_n (\beta - \hat{\beta}_n)) \\
    &-\frac{1}{\sqrt{(2\pi \sigma^2)^p |\X_n^\top \X_n|^{-1}}} \exp(-\frac{1}{2 \sigma^2} (\beta - \hat{\beta}_n)^\top \X_n^\top \X_n (\beta - \hat{\beta}_n)) \Bigr)_+ d\beta \\
    &= \int \left( \frac{\pi(\beta)}{\pi(\beta_0)} - 1 \right)_+ \frac{1}{\sqrt{(2\pi \sigma^2)^p |\X_n^\top \X_n|^{-1}}} \exp(-\frac{1}{2 \sigma^2} (\beta - \hat{\beta}_n)^\top \X_n^\top \X_n (\beta - \hat{\beta}_n))  d\beta. \\
    &= \E\left[ \left( \frac{\pi(\kappa_n)}{\pi(\beta_0)} - 1 \right)_+ \Bigr| H_n \right]
    \end{align*}

    where $\kappa_n$ is a random variable marginally distributed as $\kappa_n | H_n \sim \n(\hat{\beta}_n, \sigma^2 (\X_n^\top \X_n)^{-1})$. Let $c_n$ be a sequence of random variables satisfying $\frac{c_n}{\log \lambda_{\mathrm{max}}(\X_n^\top \X_n)} \stackrel{p}{\to} \infty$ and $\frac{c_n}{\lambda_{\mathrm{min}}(\X_n^\top \X_n)} \stackrel{p}{\to} 0$, which must exist by condition \emph{(i)}, i.e. we can let $c_n = \sqrt{\lambda_{\mathrm{min}}(\X_n^\top \X_n) \log \lambda_{\mathrm{max}}(\X_n^\top \X_n)}$. Then, to show $d(H_n) \stackrel{p}{\to} 0$, we use a common trick when showing BvM-style results \cite{van2000asymptotic, connault2014weakly, kleijn2012bernstein}---that is, we will partition the expectation into parts inside and outside a local ellipsoid. We will show the following two statements.

    \begin{gather}
        \E\left[ \left( \frac{\pi(\kappa_n)}{\pi(\beta_0)} - 1 \right)_+ 1[\|(\X_n^\top \X_n)^{1/2} (\kappa_n - \hat{\beta}_n)\|^2 \leq c_n ] \Bigr| H_n \right] \stackrel{p}{\to} 0 \label{inside} \\
        \E\left[ \left( \frac{\pi(\kappa_n)}{\pi(\beta_0)} - 1 \right)_+ 1[\|(\X_n^\top \X_n)^{1/2} (\kappa_n - \hat{\beta}_n)\|^2 > c_n ] \Bigr| H_n \right] \stackrel{p}{\to} 0 \label{outside}
    \end{gather}

    To show (\ref{inside}), note that we have

    \begin{align*}
        &\E\left[ \left( \frac{\pi(\kappa_n)}{\pi(\beta_0)} - 1 \right)_+ 1[\|(\X_n^\top \X_n)^{1/2} (\kappa_n - \hat{\beta}_n)\|^2 \leq c_n ] \Bigr| H_n \right] \\
        &\leq \frac{1}{\pi(\beta_0)} \E\left[ \left( \pi(\kappa_n) -\pi(\beta_0) \right)_+ 1[\|(\kappa_n - \hat{\beta}_n)\|^2 \leq \frac{c_n}{\lambda_{\mathrm{min}}(\X_n^\top \X_n)} ] \Bigr| H_n \right] \\
        &\leq \frac{1}{\pi(\beta_0)} \sup_{\|\kappa_n - \hat{\beta}_n\|^2 \leq \frac{c_n}{\lambda_{\mathrm{min}}(\X_n^\top \X_n)^{1/2}}} (\pi(\kappa_n) - \pi(\beta_0))_+ \\
        &\leq \frac{1}{\pi(\beta_0)} \sup_{\|\kappa_n - \beta_0\|^2 \leq \frac{c_n}{\lambda_{\mathrm{min}}(\X_n^\top \X_n)^{1/2}} + \|\hat{\beta}_n - \beta_0\|} (\pi(\kappa_n) - \pi(\beta_0))_+
    \end{align*}

    Lai and Wei showed that $\hat{\beta}_n$ is consistent if condition \emph{(i)} is satisfied \cite{lai1982least}. Thus, we have $\frac{c_n}{\lambda_{\mathrm{min}}(\X_n^\top \X_n)^{1/2}} + \|\hat{\beta}_n - \beta_0\| \stackrel{p}{\to} 0$, meaning the above expression does indeed converge to zero in probability if $\pi$ is continuous at $\beta_0$.
    
    To show (\ref{outside}), note that by the triangle inequality, we have

    \begin{align*}
        &\E\left[ \left( \frac{\pi(\kappa_n)}{\pi(\beta_0)} - 1 \right)_+ 1[\|(\X_n^\top \X_n)^{1/2} (\kappa_n - \hat{\beta}_n)\|^2 > c_n ] \Bigr| H_n \right]  \\
        &\leq \frac{1}{\pi(\beta_0)} \E\left[ \pi(\kappa_n) 1[\|(\X_n^\top \X_n)^{1/2} (\kappa_n - \hat{\beta}_n)\|^2 > c_n ] \Bigr| H_n \right] \\
        &+ \E\left[1[\|(\X_n^\top \X_n)^{1/2} (\kappa_n - \hat{\beta}_n)\|^2 > c_n ] \Bigr| H_n \right]
    \end{align*}

    The first term can then be bounded as

    \begin{align*}
        & \frac{1}{\pi(\beta_0)} \E\left[ \pi(\kappa_n) 1[\|(\X_n^\top \X_n)^{1/2} (\kappa_n - \hat{\beta}_n)\|^2 > c_n ] \Bigr| H_n \right]  \\
        &= \frac{1}{\pi(\beta_0)} \int \pi(\kappa_n) 1[\|(\X_n^\top \X_n)^{1/2} (\kappa_n - \hat{\beta}_n)\|^2 > c_n ] P(\kappa_n | H_n) d\kappa_n \\
        &\leq \frac{1}{\pi(\beta_0)} \int \pi(\kappa_n) 1[\|(\X_n^\top \X_n)^{1/2} (\kappa_n - \hat{\beta}_n)\|^2 > c_n ] \frac{1}{\sqrt{(2\pi \sigma^2)^p |\X_n^\top \X_n|^{-1}}} \exp(-\frac{c_n}{2\sigma^2}) d\kappa_n \\
        &\leq \frac{(2 \pi \sigma^2)^{-p/2}}{\pi(\beta_0)} \exp(-\frac{c_n}{2\sigma^2}) \lambda_{\mathrm{max}}(\X_n^\top \X_n)^{p/2} \int \pi(\kappa_n) 1[\|(\X_n^\top \X_n)^{1/2} (\kappa_n - \hat{\beta}_n)\|^2 > c_n ] d\kappa_n \\
        &\leq \frac{(2 \pi \sigma^2)^{-p/2}}{\pi(\beta_0)} \exp(-\frac{c_n}{2\sigma^2} + \frac{p}{2} \log \lambda_{\mathrm{max}}(\X_n^\top \X_n)) \int \pi(\kappa_n) d\kappa_n \\
        &= \frac{(2 \pi \sigma^2)^{-p/2}}{\pi(\beta_0)} \exp(-\frac{c_n}{2\sigma^2} + \frac{p}{2} \log \lambda_{\mathrm{max}}(\X_n^\top \X_n))
    \end{align*}

    If $\frac{c_n}{\log \lambda_{\mathrm{max}}(\X_n^\top \X_n)} \stackrel{p}{\to} \infty$, the above expression converges to zero in probability. To compute the second term, note that $(\X_n^\top \X_n)^{1/2} (\kappa_n - \hat{\beta}_n) | H_n$ is distributed as $\n(0, \sigma^2 I_p)$. Thus, if $c_n \stackrel{p}{\to} \infty$, then $\E\left[1[\|(\X_n^\top \X_n)^{1/2} (\kappa_n - \hat{\beta}_n)\|^2 > c_n ] \Bigr| H_n \right] \stackrel{p}{\to} 0$. We have shown that each term (\ref{inside}) and (\ref{outside}) in the decomposition of the upper bound for $d(H_n)$ converges to zero in probability, meaning $d(H_n)$ converges to zero in probability as well.

\subsection{Proof of Theorem \ref{thm:bvm_bandit}} \label{proof:bvm_bandit}

As in the proof of Theorem \ref{theorem:bvm}, the posterior distribution can be expressed as

    \[
    \pi(\beta | H_{m_n}^n) \propto \pi(\beta) \exp(-\frac{1}{2\sigma^2} (\hat{\beta}_n - \beta)^\top \X_n^\top \X_n (\hat{\beta}_n - \beta)).
    \]

    Letting $d(H_{m_n}^n) = \|\pi(\beta | H_{m_n}^n) - \n(\hat{\beta}_n, \sigma^2 (\X_n^\top \X_n)^{-1})\|_{\mathrm{TV}}$, we also have

    \[
    d(H_{m_n}^n) \leq \E\left[ \left( \frac{\pi(\kappa_n)}{\pi(\beta_0)} - 1 \right)_+ \Bigr| H_{m_n}^n \right]
    \]

    where $\kappa_n | H_{m_n}^n \sim \n(\hat{\beta}_n, \sigma^2 (\X_n^\top \X_n)^{-1})$. By Lemma \ref{lemma:triangular}, we have that $\hat{\beta}_n$ is consistent. If $\lambda_{\mathrm{min}}(\X_n^\top \X_n) \stackrel{p}{\to} \infty$, then $\kappa_n \stackrel{p}{\to} \beta_0$ as we can write $\kappa_n = \beta_0 + (\hat{\beta}_n - \beta_0) + Z_n$ where $Z_n | H_{m_n}^n \sim \n(0, \sigma^2 (\X_n^\top \X_n)^{-1})$ and both the terms $\hat{\beta}_n - \beta_0$ and $Z_n$ converge to zero in probability. If $\pi$ is continuous and bounded, by Vitali's convergence theorem,

    \[
    \E[d(H_{m_n}^n)] \leq \E\left[ \left( \frac{\pi(\kappa_n)}{\pi(\beta_0)} - 1 \right)_+ \right] \to 0,
    \]

    meaning $d(H_{m_n}^n) \stackrel{p}{\to} 0$.

\subsection{Proof of Theorem \ref{thm:bvm_exp}} \label{proof:bvm_exp}

Suppose $Y_{i, (1)},Y_{i,(2)},\ldots$ are the serialized arm pulls from arm $i$. Let $\bar{Y}_{i, (N)} = \frac{1}{N} \sum_{j=1}^N y_{i,(j)}$ be the sequence of sample means. By Lemma \ref{lem:uniform_clt}, we have

     \begin{equation}\label{unif_bound}
         \frac{\bar{Y}_{i, (N)} - \mu}{\sigma} = o(N^{-1/2 + \epsilon}) \text{ almost surely as } N \to \infty.
     \end{equation}
     
     for some small $\epsilon$, say $\epsilon=0.01$. Since $\bar{Y}_{n,i} = \bar{Y}_{i, (N_{n,i})}$, we have that $\bar{Y}_{n,i}-\mu_i = O_p(N_{n,i}^{-1/2+\epsilon})$. Thus, $\hat{\beta}_{n,i} - \beta_{0,i} = O_p(N_{n,i}^{-1/2+\epsilon})$, meaning $\hat{\beta}_{n,i}$ is consistent for $\beta_{0,i}$.
          
     We prove the result in three steps by truncating both the posterior and normal distribution to the set $C_n = \{ \beta :\forall i, |\beta_i - \beta_{0,i}| \leq c_{n,i} \}$ for some sequence of random variables $c_{n,i}$ such that $N_{n,i}^{1/3} c_{n,i} \stackrel{p}{\to} 0$ and $N_{n,i}^{1/2-\epsilon} c_{n,i} \stackrel{p}{\to} \infty$ as $n \to \infty$. We then show that

    \begin{align}
        \|\pi(\beta | H_n) - \pi^{C_n}(\beta | H_n)\|_{\mathrm{TV}} &\stackrel{p}{\to} 0 \label{ef_trunc_post} \\
        \|\pi^{C_n}(\beta | H_n) - \n^{C_n}(\beta_0, I_n^{-1})\|_{\mathrm{TV}} &\stackrel{p}{\to} 0 \label{ef_taylor} \\
        \|\n^{C_n}(\beta_0, I_n^{-1}) - \n(\beta_0, I_n^{-1})\|_{\mathrm{TV}} &\stackrel{p}{\to} 0 \label{ef_trunc_normal}.
    \end{align}

    We first show (\ref{ef_taylor}). The likelihood function can be calculated as

    \[
    L(\beta; H_n) \propto \prod_{i=1}^p \exp(N_{n,i} (\bar{Y}_{n,i} \beta_i - b(\beta_i))).
    \]

    By Taylor's Theorem, for any $\beta \in C_n$, we have

    \begin{align*}
        L(\beta; H_n) &\propto \exp\left( \sum_{i=1}^p N_{n,i} [\bar{Y}_{n,i} \beta_i - b(\beta_{0,i}) - b'(\beta_{0,i}) (\beta_i - \beta_{0,i}) \right. \\
        &- \left. \frac{1}{2} b''(\beta_{0,i}) (\beta_i - \beta_{0,i})^2 - \frac{1}{6} b'''(\beta^*_{n,\beta, i}) (\beta_i - \beta_{0,i})^3]\right)
    \end{align*}

     for some $\beta^*_{n,\beta} \in C_n$. However, if $b'''$ is bounded on a neighborhood of $\beta_{0,i}$ for each $i$ and $N_{n,i}^{1/3} c_{n,i} \stackrel{p}{\to} 0$, we have that $\sup_{\beta \in C_n} N_{n,i} b'''(\beta^*_{n,\beta,i}) (\beta_i - \beta_{0,i})^3 \stackrel{p}{\to} 0$. Thus, we can write

     \begin{align}
         L(\beta; H_n) &\propto \exp\left( \sum_{i=1}^p N_{n,i} [\bar{Y}_{n,i} \beta_i - b(\beta_{0,i}) - b'(\beta_{0,i}) (\beta_i - \beta_{0,i}) - \frac{1}{2} b''(\beta_{0,i}) (\beta_i - \beta_{0,i})^2]  + o_p(1)\right) \nonumber \\
         &\propto \exp\left( \sum_{i=1}^p N_{n,i}[ -\frac{1}{2} b''(\beta_{0,i}) \beta_i^2 + (\bar{Y}_{n,i} - b'(\beta_{0,i}) + b''(\beta_{0,i}) \beta_{0,i}) \beta_i] \right) (1 + o_p(1)) \nonumber \\
         &= \exp\left( \sum_{i=1}^p N_{n,i}[ -\frac{1}{2} b''(\beta_{0,i}) \beta_i^2 + b''(\beta_{0,i}) \hat{\beta}_{n,i} \beta_i] \right) (1 + o_p(1)) \nonumber \\
         &\propto \exp\left( -\frac{1}{2} \sum_{i=1}^p N_{n,i} b''(\beta_{0,i}) (\beta_i - \hat{\beta}_{n,i})^2 \right)(1 + o_p(1)) \label{eqn:second_order} \\
         &= \exp(-\frac{1}{2} (\beta - \hat{\beta}_n)^\top I_n (\beta - \hat{\beta}_n))(1 + o_p(1)) \nonumber
     \end{align}

     Let the density of the truncated normal distribution $\n^{C_n}(\hat{\beta}_{n,i}, I_n^{-1})$ be expressed as

    \[
    A_n 1_{\beta \in C_n} \exp(-\frac{1}{2} (\beta - \hat{\beta}_{n,i})^\top I_n (\beta - \hat{\beta}_{n,i})).
    \]

    where $A_n$ is chosen such that the density is suitably normalized. Let $d_n(H_n) = \|\pi^{C_n}(\beta | H_n) - \n^{C_n}(\beta_0, I_n^{-1})\|_{\mathrm{TV}}$. By Lemma \ref{lemma:TV}, we have

    \begin{align*}
        d_n(H_n) &\leq \int \left( A_n 1_{\beta \in C_n} \frac{\pi(\beta) L(\beta; H_n)}{\pi(\hat{\beta}_n) L(\hat{\beta}_n; H_n)} - A_n 1_{\beta \in C_n}\exp(-\frac{1}{2} (\beta - \hat{\beta}_n)^\top I_n (\beta - \hat{\beta}_n)) \right)_+ d\beta \\
        &\leq \int A_n 1_{\beta \in C_n} \left(\frac{\pi(\beta)}{\pi(\hat{\beta}_n)} \exp(-\frac{1}{2} (\beta - \hat{\beta}_n)^\top I_n (\beta - \hat{\beta}_n)) (1 + \bar{o}_p(1)) \right. \\
        &-\left. \exp(-\frac{1}{2} (\beta - \hat{\beta}_n)^\top I_n (\beta - \hat{\beta}_n)) \right)_+ d\beta
    \end{align*}

    where we say a sequence of random variables $W_n(\beta)$ indexed by $\beta$ is $\bar{o}_p(1)$ if $\sup_{\beta \in C_n} W_n(\beta) \stackrel{p}{\to} 0$. Thus, we have

    \begin{align*}
        d_n(H_n) &\leq \int \left( \left(\frac{\pi(\beta)}{\pi(\hat{\beta}_n)} -1 + \bar{o}_p(1)\right)A_n 1_{\beta \in C_n}\exp(-\frac{1}{2} (\beta - \hat{\beta}_n)^\top I_n (\beta - \hat{\beta}_n)) \right)_+ d\beta. \\
        &= \E_{\eta_n \sim \n^{C_n}(\hat{\beta}_{n,i}, I_n^{-1})} \left( \frac{\pi(\eta_n)}{\pi(\hat{\beta}_n)} -1 + \bar{o}_p(1) \right)_+ \\
        &\leq \left( \frac{\sup_{\beta \in C_n} \pi(\beta)}{\inf_{\beta \in C_n} \pi(\beta)} - 1 + \bar{o}_p(1) \right)_+
    \end{align*}
    
    Then, since $c_{n,i} \stackrel{p}{\to} 0$ for each $i$ and $\pi$ is continuous, we know $\frac{\sup_{\beta \in C_n} \pi(\beta)}{\inf_{\beta \in C_n} \pi(\beta)} \stackrel{p}{\to} 1$, meaning the above expression indeed converges to $0$ in probability. Next, we show (\ref{ef_trunc_post}). Note that

     \begin{align*}
        \|\pi(\beta | H_n) - \pi^{C_n}(\beta | H_n)\|_{\mathrm{TV}} &= P_\pi(\beta \in C_n^C | H_n) \\
        &= \frac{\int_{C_n^C} \pi(\beta) L(\beta; H_n) d\beta}{\int_{\R^p} \pi(\beta) L(\beta; H_n) d\beta} \\
        &\leq \frac{\pi^{\mathrm{max}}}{\pi^{\mathrm{min}}_n} \frac{\int_{C_n^C} L(\beta; H_n) d\beta}{\int_{C_n} L(\beta; H_n) d\beta}
    \end{align*}

    where $\pi^{\mathrm{max}} = \sup_\beta \pi(\beta)$ and $\pi^{\mathrm{min}}_n = \inf_{\beta \in C_n} \pi(\beta)$. Note that if $\pi$ is continuous and positive at $\beta_0$, then $\pi_n^{\mathrm{min}}$ converges to some positive constant. Then, since the ancillary function $b$ of the exponential family is convex, the likelihood function is convex in $\beta$, meaning we have

    \begin{align*}
        \frac{\int_{C_n^C} L(\beta; H_n) d\beta}{\int_{C_n} L(\beta; H_n) d\beta} &\leq \sup_{v \in \partial C_n} \frac{\int_1^\infty L(\beta_0 + (v - \beta_0)t; H_n) t^{p-1} dt}{\int_0^1 L(\beta_0 + (v - \beta_0)t; H_n) t^{p-1} dt} \\
        &\leq \sup_{v \in \partial C_n} \frac{\int_1^\infty \exp(\ell_n(\beta_0) + (\ell_n(v) - \ell_n(\beta_0)) t) t^{p-1} dt}{\int_0^1 \exp(\ell_n(\beta_0) + (\ell_n(v) - \ell_n(\beta_0)) t) t^{p-1} dt} \\
        &= \sup_{v \in \partial C_n} \frac{\int_{\ell_n(\beta_0) - \ell_n(v)}^\infty \exp(-x) x^{p-1} dx}{\int_0^{\ell_n(\beta_0) - \ell_n(v)} \exp(-x) x^{p-1} dx} \\
        &= \frac{\int_{a_n}^\infty \exp(-x) x^{p-1} dx}{\int_0^{a_n} \exp(-x) x^{p-1} dx}
    \end{align*}

    where $a_n = \inf_{v \in \partial C_n} \ell_n(\beta_0) - \ell_n(v)$. Note that the function $a \mapsto \frac{\int_a^\infty \exp(-x) x^{p-1} dx}{\int_0^a \exp(-x) x^{p-1} dx}$ goes to zero as $a \to \infty$. Thus, it suffices to show that $a_n \stackrel{p}{\to} \infty$. To do this, note that by expression \ref{eqn:second_order}, we have

    \[
    a_n = \inf_{v \in \partial C_n} -\frac{1}{2} \sum_{i=1}^p N_{n,i} b''(\beta_{0,i}) [(\beta_{0,i} - \hat{\beta}_{n,i})^2 - (v_i - \hat{\beta}_{n,i})^2] + \bar{o}_p(1).
    \]

    Since $|v_i - \hat{\beta}_{n,i}| \geq \|v_i - \beta_{0,i}| - |\beta_{0,i} - \hat{\beta}_{n,i}\| = |c_{n,i} -|\beta_{0,i} - \hat{\beta}_{n,i}\|$, we have

    \begin{align*}
        a_n &\geq \inf_{v \in \partial C_n} -\frac{1}{2} \sum_{i=1}^p N_{n,i} b''(\beta_{0,i}) [|\beta_{0,i} - \hat{\beta}_{n,i}|^2 - (c_{n,i} - |\beta_{0,i} - \hat{\beta}_{n,i}|)^2] + \bar{o}_p(1) \\
        &= \inf_{v \in \partial C_n} -\frac{1}{2} \sum_{i=1}^p N_{n,i} b''(\beta_{0,i}) [|\beta_{0,i} - \hat{\beta}_{n,i}|^2 - c_{n,i}^2 + 2c_{n,i} |\beta_{0,i} - \hat{\beta}_{n,i}| - |\beta_{0,i} - \hat{\beta}_{n,i}|^2] + \bar{o}_p(1) \\
        &= \inf_{v \in \partial C_n} \frac{1}{2} \sum_{i=1}^p c_{n,i}^2 N_{n,i} b''(\beta_{0,i}) [1 - \frac{2|\beta_{0,i} - \hat{\beta}_{n,i}|}{c_{n,i}}] + \bar{o}_p(1)
    \end{align*}

    By the definition of $c_{n,i}$ as well as Equation (\ref{unif_bound}), we know $c_{n,i}^2 N_{n,i} \stackrel{p}{\to} \infty$ and $\frac{\beta_{0,i} - \hat{\beta}_{n,i}}{c_{n,i}} \stackrel{p}{\to} 0$. Thus, we indeed have $a_n \stackrel{p}{\to} \infty$. Lastly, we show (\ref{ef_trunc_normal}). We have

    \begin{align*}
        \|\n(\beta_0, I_n^{-1}) - \n^{C_n}(\beta_0, I_n^{-1})\|_{\mathrm{TV}} &= P_{X \sim \n(\beta_0, I_n^{-1})} (\exists i,|X_i - \beta_{0,i}| > c_{n,i}) \\
        &= P_{X \sim \n(0, I_{p \times p})} (\exists i,|X_i| > c_{n,i} N_{n,i}^{1/2} b''(\beta_{0,i})^{1/2}).
    \end{align*}

    If $c_{n,i} N_{n,i}^{1/2} \stackrel{p}{\to} \infty$, then the above expression indeed coverges to zero in probability.

\subsection{Proof of Theorem \ref{thm:contextual}} \label{proof:contextual}

We only need to establish the consistency of $\hat{\beta}_n$, after which the result follows by a similar argument to the proof of Theorem \ref{thm:bvm_bandit}. Consistency can be shown from condition (ii) by Theorem 1 of \cite{lai1982least}.

\section{Technical Lemmas} \label{appx:lemma_proof}

\textbf{Proof of Lemma \ref{lemma:triangular}: \hspace{0.2cm}} We translate this directly to the bandit setting, after which this result follows from a similar version of Theorem 2.2 of \cite{gut2009stopped}. For each $i=1,\ldots,p$, let $\mu_i = \beta_i$ be the $i$-th coordinate of $\beta$ which represents the true mean of arm $i$. Let $N_{i,n} = (\X_n)_{ii}$ be the number of times arm $i$ is pulled in the trajectory $H_{m_n}^n$. We serialize all arm pulls from each arm $i$---that is, suppose $y_{i, (1)}^n, y_{i, (2)}^n, \ldots \stackrel{ind}{\sim} \n(\mu_i, \sigma^2)$. Then, when drawing $y_j^n | x_j^n \sim \n(\mu_{x_j}, \sigma^2)$, we look up the next unused sample in the serialized sequence $y_{x_j, (1)}^n,\ldots$ and use that as our observation.

    Under this formulation of the data-generating process, the sample mean $\hat{\beta}_{n,i}$ is simply the mean of the first $N_{i,n}$ samples from the serialized sequence $y_{i,(1)}^n,\ldots$. Note that the serialized sample means $\bar{y}_{i, t}^n = \frac{1}{t} \sum_{j=1}^t y_{i, (j)}^n$ satisfy $\bar{y}_{i, t}^n \stackrel{a.s.}{\to} \mu_i$ as $t \to \infty$ for each $i,n$ by the strong law of large numbers. Then, Proposition \ref{prop:convergence} implies that for each $i,n$, there exists some function $\epsilon_i^n(\cdot)$ satisfying $\lim_{B \to \infty} \epsilon_i^n(B) = 0$ such that for any $B$,

    \[
    \p[N \geq B] \geq 1-\frac{1}{B} \implies \p[|\bar{y}_{i, (N)}^n - \mu_i| \leq \epsilon_i^n(B)] \geq 1 - \epsilon_i^n(B)
    \]

    Crucially, note that the functions $\epsilon_i^n(\cdot)$ can be chosen such that $\epsilon_i^n(\cdot)$ does not depend on $n$. This is because the distribution of serialized samples $\{y_{i, (j)}^n\}_{i \in \{1,\ldots,p\}, j \geq 1}$ for each trajectory is identical, and in Proposition \ref{prop:convergence}, the bound translation function $\epsilon$ is chosen solely based on the distribution of the almost-surely convergent sequence. Finally, by condition \textit{(ii)}, we indeed have that $N_{i,n} \stackrel{p}{\to} \infty$ for each $i$, meaning the condition $\p[N_{i, n} \geq B_n] \geq 1-\frac{1}{B_n}$ is indeed satisfied for some sequence $B_n \to \infty$ - to see this, note that letting $B_n = \sup \{B : \p[N_{i, n} \geq B] \geq 1-\frac{1}{B}\}$, we must have $B_n \to \infty$ as otherwise there would be some constant $C$ where $\p[N_{i, n} \leq C] > \frac{1}{C}$ for infinitely many $n$ which would contradict the statement that $N_{i,n}$ diverges in probability. Thus, we have $\p[|\bar{y}_{i, (N_{i,n})}^n - \mu_i| \leq \epsilon_i^n(B_n)] \geq 1 - \epsilon_i^n(B_n)$ for each $n$ meaning the sample mean $\hat{\beta}_{n,i} = \bar{y}_{i, (N_{i,n})}^n$ is indeed consistent for $\mu_i$.

\begin{proposition} (Bound translation) \label{prop:convergence}
    Suppose $Y_n$ is a sequence of random variables such that $Y_n \stackrel{a.s.}{\to} Y$ for some real number $Y$. Then there exists a function $\epsilon(B)$ with $\epsilon(B) \to 0$ as $B \to \infty$ such that for any random variable $N$ and positive integer $B$, we have

    \[
    \p[N \geq B] \geq 1 - \frac{1}{B} \implies \p[|Y_N - Y| \leq \epsilon(B)] \geq 1-\epsilon(B).
    \]
\end{proposition}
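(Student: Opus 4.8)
The plan is to reduce the statement about the random index $N$ to a uniform tail bound on the sequence. I would introduce the tail supremum $M_B = \sup_{k \geq B} |Y_k - Y|$. The standard characterization of almost sure convergence says that $Y_n \stackrel{a.s.}{\to} Y$ is equivalent to $M_B \stackrel{a.s.}{\to} 0$ as $B \to \infty$, and in particular $M_B \con 0$. The crucial observation---which requires no independence between $N$ and the sequence---is that on the event $\{N \geq B\}$ one has the pointwise bound $|Y_N - Y| \leq M_B$, since $M_B$ dominates $|Y_k - Y|$ for every index $k \geq B$. This is what lets us replace the awkward random-index quantity $|Y_N - Y|$ by an envelope that is deterministic in $N$.

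Next I would construct a single tolerance function $\epsilon(B)$ depending only on the law of $\{Y_n\}$. Since $\delta \mapsto \p[M_B > \delta]$ is non-increasing and right-continuous, define $h(B) = \inf\{\delta > 0 : \p[M_B > \delta] \leq \delta\}$. Using $M_B \con 0$, for any fixed $\delta > 0$ one has $\p[M_B > \delta] \leq \delta$ for all large $B$, whence $h(B) \to 0$; right-continuity of the tail then yields $\p[M_B > h(B)] \leq h(B)$. I would then set $\epsilon(B) = h(B) + \tfrac{1}{B}$, which tends to $0$ and, importantly, does not depend on $N$.

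Finally I would assemble the bound by decomposing on whether $N \geq B$:
\[
\p[|Y_N - Y| > \epsilon(B)] \leq \p[N < B] + \p\left[\{N \geq B\} \cap \{M_B > \epsilon(B)\}\right] \leq \p[N < B] + \p[M_B > h(B)].
\]
Under the hypothesis $\p[N \geq B] \geq 1 - \tfrac{1}{B}$ the first term is at most $\tfrac{1}{B}$ and the second is at most $h(B)$, so the total is at most $\tfrac{1}{B} + h(B) = \epsilon(B)$, i.e. $\p[|Y_N - Y| \leq \epsilon(B)] \geq 1 - \epsilon(B)$, as desired.

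The main obstacle is the self-referential nature of the conclusion: the same function $\epsilon(B)$ must simultaneously control the approximation error $|Y_N - Y|$ and the failure probability, while being uniform over all random indices $N$ and depending only on the distribution of $\{Y_n\}$ (this last point is exactly what makes the functions $\epsilon_i^n$ in Lemma~\ref{lemma:triangular} independent of $n$, since the serialized sequences share a common law). Packaging the tail decay of $M_B$ into the single quantity $h(B)$ via the diagonal definition above is what resolves the self-reference, and the uniform-in-$N$ aspect comes for free once $|Y_N - Y|$ is replaced by the envelope $M_B$ on $\{N \geq B\}$.
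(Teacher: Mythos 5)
Your proof is correct and follows essentially the same route as the paper's: both replace $|Y_N - Y|$ on the event $\{N \geq B\}$ by the tail supremum $\sup_{n \geq B}|Y_n - Y|$ (the paper phrases this as the event $\{\forall n \geq B,\ |Y_n - Y| \leq \eta\}$) and absorb $\p[N < B] \leq \tfrac{1}{B}$ into the error term. The only difference is that your quantile-type construction $h(B) = \inf\{\delta > 0 : \p[M_B > \delta] \leq \delta\}$ carries out explicitly, in one step, the diagonalization over tolerances $\eta$ that the paper compresses into its closing sentence (``since we can do this for any $\eta$, the statement of the result is indeed true'').
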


\begin{proof}
    Note that it is sufficient to show that for any fixed $\eta$, there is some function $\epsilon_\eta(B)$ such that $\lim_{B\to\infty} \epsilon_\eta(B) = 0$ and

    \[
    \p[N \geq B] \geq 1 - \frac{1}{B} \implies \p[|Y_N - Y| \leq \eta] \geq 1-\epsilon_\eta(B).
    \]

    To show this, we simply let $\epsilon_\eta(B) = \frac{1}{B} + \p[\exists n \geq B, |Y_n - Y| > \eta]$. If $Y_n$ converges almost surely to $Y$, then $\lim_{B \to \infty} \epsilon_\eta(B) = 0$. Also, we indeed have

    \[
    \p[|Y_N - Y| \leq \eta] \geq \p[N \geq B \text{ and } \forall n \geq B, |Y_n - Y| \leq \eta] \geq 1-\epsilon_\eta(B).
    \]

    Since we can do this for any $\eta$, the statement of the result is indeed true.
\end{proof}

\begin{lemma} \label{lemma:TV}
    Let $P(\cdot)$ and $Q(\cdot)$ be continuous distributions and $c \in \R$ be any constant. Then, $\|P - Q\|_{\mathrm{TV}} = \frac{1}{2} \int (P(x) - Q(x))_+ dx \leq \int (cP(x) - Q(x))_+ dx$.
\end{lemma}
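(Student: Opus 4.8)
The plan is to establish the equality and the inequality in the statement as two independent and elementary facts, relying only on the nonnegativity of $P$ and $Q$ and the normalization $\int P\,dx = \int Q\,dx = 1$.

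For the equality, I would decompose the signed difference into positive and negative parts, writing $P-Q = (P-Q)_+ - (Q-P)_+$ and $|P-Q| = (P-Q)_+ + (Q-P)_+$. The crucial observation is that $\int (P-Q)\,dx = \int P\,dx - \int Q\,dx = 0$, which forces $\int (P-Q)_+\,dx = \int (Q-P)_+\,dx$. Substituting this into the expansion of $\int |P-Q|\,dx$ shows that the two halves of the $L^1$ distance coincide, so each equals $\int(P-Q)_+\,dx$, and the definition of $\|\cdot\|_{\mathrm{TV}}$ then yields the stated identity. This step is pure bookkeeping; the only point of care is tracking the leading constant consistently with the total-variation normalization being used.

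For the inequality, I would argue by pointwise domination of the integrands rather than by manipulating integrals. Since $P(x) \ge 0$ for all $x$, scaling gives $cP(x) \ge P(x)$, hence $cP(x) - Q(x) \ge P(x) - Q(x)$ at every point; because $t \mapsto t_+$ is nondecreasing, this implies $(cP(x) - Q(x))_+ \ge (P(x) - Q(x))_+$ pointwise, and integrating preserves the inequality to give $\int(cP-Q)_+\,dx \ge \int(P-Q)_+\,dx$. The main thing to get right — and the only genuine subtlety in the lemma — is the admissible range of $c$: the step $cP \ge P$ uses $c \ge 1$ together with $P \ge 0$, so the argument delivers the bound precisely in the regime $c \ge 1$, while for $c < 1$ the pointwise domination fails and the inequality can be violated. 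I would therefore record the inequality under the hypothesis $c \ge 1$. This scaled comparison is exactly what is needed downstream, where it lets one replace the normalized posterior by an unnormalized expression proportional to the prior times the Gaussian density, absorbing the posterior's normalizing constant into the factor $c$ (cf. the proof of Theorem~\ref{theorem:bvm}). Beyond monotonicity of the positive-part map and the fact that $P-Q$ integrates to zero, no further machinery is required, so once the constant regime is fixed there is no substantial obstacle.
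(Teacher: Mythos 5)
Your proof is correct, and your handling of the constant is actually sharper than the paper's own. For the equality, you and the paper do the same bookkeeping: $\int (P-Q)\,dx = 0$ forces $\int (P-Q)_+\,dx = \int (Q-P)_+\,dx$ (in the paper's notation, $a_1 = a_2$), and each equals $\|P-Q\|_{\mathrm{TV}}$. Your remark about tracking the leading constant is on point: the factor $\frac12$ in the displayed identity is a typo for $\frac{1}{2}\int |P(x)-Q(x)|\,dx$, since with the standard convention $\|P-Q\|_{\mathrm{TV}} = \int (P-Q)_+\,dx$, which is what the paper's proof in fact computes ($a_1 = a_2 = \|P-Q\|_{\mathrm{TV}}$) and what is used downstream. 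For $c \geq 1$, your pointwise-domination argument ($cP \geq P \geq 0$ plus monotonicity of $t \mapsto t_+$) is a marginally cleaner route to the same bound the paper obtains by integrating over $A = \{P > Q\}$.

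Where you genuinely diverge is the regime $c < 1$, and there you are right and the paper is wrong. The paper's second case asserts $a_2 \leq \int_{A^C} (Q(x) - cP(x))\,dx \leq \int (cP(x)-Q(x))_+\,dx$, but the last step fails: on $A^C$ one has $cP \leq P \leq Q$, so $(cP-Q)_+$ vanishes there, and nothing controls the mass $\int_{A^C}(Q-cP)\,dx$ by $\int_A (cP-Q)_+\,dx$. Your claim that the inequality \emph{can} be violated (not merely that the pointwise domination fails) is confirmed by a concrete counterexample: take $P$ and $Q$ uniform on disjoint intervals and $c = \tfrac12$, so $\|P-Q\|_{\mathrm{TV}} = 1$ while $\int (cP-Q)_+\,dx = \tfrac12$; taking $c=0$ makes the right-hand side $0$. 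So ``any constant $c$'' cannot stand and your hypothesis $c \geq 1$ is the correct one. One caveat on your final remark, though: in the proof of Theorem~\ref{theorem:bvm} the lemma is invoked with $c = \pi(\beta_0)^{-1}\int \pi(\beta)\,\n(\beta; \hat{\beta}_n, \sigma^2(\X_n^\top\X_n)^{-1})\,d\beta$, which is \emph{not} guaranteed to be at least $1$, so your restricted lemma does not literally cover that application. The fix is cheap: for $c \leq 1$ the same monotonicity gives $\|P-Q\|_{\mathrm{TV}} = \int (Q-cP)_+\,dx = \int (cP-Q)_+\,dx + (1-c)$, so for all $c$ one has $\|P-Q\|_{\mathrm{TV}} \leq \int (cP-Q)_+\,dx + (1-c)_+$; in the application the extra term is bounded by $\E\bigl[\bigl(1 - \pi(\kappa_n)/\pi(\beta_0)\bigr)_+ \,\bigr|\, H_n\bigr]$, which vanishes by the same continuity-at-$\beta_0$ and ellipsoid-truncation argument already used for the main term.
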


\begin{proof}
    Let $A = \{x : P(x) > Q(x) \}$. Let $a_1 = \int_A P(x) - Q(x) dx$ and $a_2 = \int_{A^C} Q(x) - P(x) dx$. Then, note that

    \[
    \|P - Q\|_{\mathrm{TV}} = \frac{1}{2} \int (P(x) - Q(x))_+ dx = \frac{1}{2} a_1 + \frac{1}{2} a_2
    \]

    and

    \[
    0 = \int P(x) - Q(x) dx = a_1 - a_2
    \]

    Solving for $a_1$ and $a_2$, we get

    \[
    a_1 = a_2 = \|P - Q\|_{\mathrm{TV}}.
    \]
    
    To show the desired result, if $c \geq 1$, we have

    \[
    \|P - Q\|_{\mathrm{TV}} = a_1 \leq \int_A c P(x) - Q(x) dx \leq \int (cP(x) - Q(x))_+ dx.
    \]

    Similarly, if $c \leq 1$, then

    \[
    \|P - Q\|_{\mathrm{TV}} = a_2 \leq \int_{A^C} Q(x) - cP(x) dx \leq \int (cP(x) - Q(x))_+ dx.
    \]
\end{proof}

\begin{lemma} \label{lem:uniform_clt}
    Let $X_1,X_2,\ldots$ be i.i.d. samples from a distribution $P$ with mean $\mu$ and finite variance $\sigma^2$. Let $\hat{\mu}_n = \frac{1}{n} \sum_{i=1}^n X_i$. Then,

    \[
    n^{1/2-\epsilon} (\hat{\mu}_n - \mu) \stackrel{a.s.}{\to} 0.
    \]

    for any $\epsilon > 0$.
\end{lemma}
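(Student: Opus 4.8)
The plan is to recognize this as a strong law of large numbers with an explicit polynomial convergence rate: we want $\hat\mu_n - \mu = o(n^{-1/2+\epsilon})$ almost surely. The naive route---combining Chebyshev's inequality with Borel--Cantelli on the full index set---does not work, since
\[
\p\bigl(n^{1/2-\epsilon}|\hat\mu_n - \mu| > \delta\bigr) = \p\bigl(|\hat\mu_n - \mu| > \delta n^{-1/2+\epsilon}\bigr) \leq \frac{\sigma^2}{\delta^2 n^{2\epsilon}},
\]
and $\sum_n n^{-2\epsilon}$ diverges whenever $\epsilon \leq 1/2$. Controlling this sub-$\sqrt{n}$ regime is the crux of the argument.

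I would resolve it by a dyadic blocking argument. Write $S_n = \sum_{i=1}^n (X_i - \mu)$, so that $n^{1/2-\epsilon}|\hat\mu_n - \mu| = |S_n|/n^{1/2+\epsilon}$, and take the subsequence $n_k = 2^k$. Along this subsequence the bound above reads $\sigma^2/(\delta^2 2^{2k\epsilon})$, which is summable in $k$; Borel--Cantelli then gives convergence to $0$ along $\{n_k\}$. To fill in the gaps, for $n_k \leq n < n_{k+1}$ I would bound
\[
\max_{n_k \leq n < n_{k+1}} \frac{|S_n|}{n^{1/2+\epsilon}} \leq \frac{1}{n_k^{1/2+\epsilon}} \max_{1 \leq n \leq n_{k+1}} |S_n|,
\]
and apply Kolmogorov's maximal inequality, which uses only the finite variance: $\p(\max_{n \leq n_{k+1}} |S_n| > \lambda) \leq n_{k+1}\sigma^2/\lambda^2$. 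Taking $\lambda = \delta n_k^{1/2+\epsilon}$ yields a block-wise tail bound of order $\sigma^2/(\delta^2 2^{2k\epsilon})$, again summable in $k$. A second application of Borel--Cantelli shows the maximal fluctuation over each block exceeds $\delta$ only finitely often; intersecting over $\delta = 1/m$ gives $n^{1/2-\epsilon}|\hat\mu_n - \mu| \to 0$ almost surely.

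A slicker alternative, and the one the surrounding discussion seems to anticipate, is to invoke the Hartman--Wintner law of the iterated logarithm directly: for i.i.d.\ summands with finite variance,
\[
\limsup_{n\to\infty} \frac{|\hat\mu_n - \mu|}{\sqrt{2\sigma^2 \log\log n / n}} = 1 \quad \text{a.s.},
\]
so that $\sqrt{n}\,|\hat\mu_n - \mu| = O\bigl(\sqrt{\log\log n}\bigr)$ almost surely. Multiplying by $n^{-\epsilon}$ and using that $n^{-\epsilon}\sqrt{\log\log n} \to 0$ for every $\epsilon > 0$ immediately gives the claim. The main obstacle either way is the same: the rate is too fast for termwise summability when $\epsilon \leq 1/2$, and one must either pay for the deep LIL or do the elementary blocking-plus-maximal-inequality bookkeeping; once past that point the computation is routine.
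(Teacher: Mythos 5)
Your proposal is correct, and in fact contains the paper's proof as your second route: the paper proves Lemma \ref{lem:uniform_clt} exactly via the Hartman--Wintner law of the iterated logarithm, writing $\limsup_n \sqrt{n}|\hat{\mu}_n - \mu|/\sqrt{2\sigma^2\log\log n} = 1$ almost surely and then multiplying by $\limsup_n \sqrt{2\sigma^2\log\log n}/n^{\epsilon} = 0$, which is precisely your ``slicker alternative.'' Your primary argument---dyadic blocking along $n_k = 2^k$ with Kolmogorov's maximal inequality to control the fluctuations within each block, followed by Borel--Cantelli and an intersection over $\delta = 1/m$---is a genuinely different and entirely valid route. The bookkeeping checks out: with $\lambda = \delta n_k^{1/2+\epsilon}$, the maximal inequality gives a block-wise tail of order $2^{k+1}\sigma^2/(\delta^2 2^{k(1+2\epsilon)}) = 2\sigma^2/(\delta^2 2^{2k\epsilon})$, summable in $k$ for every $\epsilon > 0$, and your observation that filling the gaps between dyadic points is necessary (convergence along the subsequence alone does not suffice) is exactly the right concern. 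What each approach buys: the blocking argument is self-contained and elementary, using nothing beyond Chebyshev and Kolmogorov's maximal inequality, whereas the LIL route is a two-line deduction but rests on a much deeper theorem. Both use only the finite-variance hypothesis, so neither is more general than the other; your diagnosis that naive termwise Chebyshev plus Borel--Cantelli fails for $\epsilon \leq 1/2$ (since $\sum_n n^{-2\epsilon}$ diverges) correctly identifies why some additional device is unavoidable.
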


\begin{proof}
    By the law of iterated logarithms, we have

    \[
    \limsup_{n \to \infty} \frac{\sqrt{n}|\hat{\mu}_n - \mu|}{\sqrt{2 \sigma^2 \log \log n}} = 1
    \]

    with probability 1. Then, we have

    \[
    \limsup_{n \to \infty} n^{1/2-\epsilon} |\hat{\mu}_n - \mu| \leq \left(\limsup_{n \to \infty} \frac{\sqrt{n}|\hat{\mu}_n - \mu|}{\sqrt{2 \sigma^2 \log \log n}}\right) \left( \limsup_{n\to\infty} \frac{\sqrt{2 \sigma^2 \log \log n}}{n^\epsilon} \right) = 0
    \]

    with probability 1.
\end{proof}

\section{Auxilliary results}\label{app:aux}
\begin{proposition}\label{lai-wei-counterex}
    Let $x_1$ be deterministic and take any real value. Let the adaptive sampling procedure simply deterministically set $x_i = y_{i-1}$ for $i > 1$. Then, if $\beta = 1$, the MLE is asymptotically non-Normal. However, condition (i) of Theorem~\ref{theorem:bvm} is satisfied by this sampling design.
\end{proposition}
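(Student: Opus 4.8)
The plan is to recognize this as the classic unit-root autoregression and to split the argument into (a) checking condition \emph{(i)} and (b) establishing asymptotic non-normality. First I would note that since $p=1$, the matrix $\X_n^\top\X_n$ is the scalar $S_n := \sum_{j=1}^n x_j^2$, so $\lambda_{\mathrm{min}} = \lambda_{\mathrm{max}} = S_n$ and condition \emph{(i)} reduces to the single requirement $S_n \stackrel{p}{\to}\infty$: the second part, $\log S_n = o_p(S_n)$, is then automatic because $\log t / t \to 0$ as $t\to\infty$. Setting $y_0 := x_1$, the recursion $x_j = y_{j-1}$ together with $y_j = \beta y_{j-1} + \epsilon_j$ and $\beta = 1$ gives $y_j = y_0 + \sum_{k\le j}\epsilon_k$ and $S_n = \sum_{j=0}^{n-1} y_j^2$, i.e.\ a random walk.

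To verify $S_n\stackrel{p}{\to}\infty$ I would invoke Donsker's theorem: the rescaled partial-sum process $n^{-1/2} y_{\lfloor nt\rfloor}$ converges weakly to $\sigma W(t)$ for a standard Brownian motion $W$, so by the continuous mapping theorem $n^{-2} S_n \stackrel{d}{\to}\sigma^2\int_0^1 W(t)^2\,dt$, a limit that is almost surely strictly positive. Hence $S_n$ diverges in probability and condition \emph{(i)} holds. A cruder moment bound, using $\E[S_n]\asymp \sigma^2 n^2$, would also suffice.

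For non-normality I would write $\hat{\beta}_n - 1 = M_n/S_n$ with $M_n := \sum_{j=1}^n x_j\epsilon_j = \sum_{j=1}^n y_{j-1}\epsilon_j$, a martingale whose quadratic variation is $\sigma^2 S_n$. The telescoping identity $y_n^2 - y_0^2 = 2M_n + \sum_{j=1}^n \epsilon_j^2$ yields $M_n = \tfrac12(y_n^2 - y_0^2 - \sum_{j=1}^n\epsilon_j^2)$, so combining the FCLT ($n^{-1}y_n^2\stackrel{d}{\to}\sigma^2 W(1)^2$) with the law of large numbers ($n^{-1}\sum_j\epsilon_j^2\stackrel{p}{\to}\sigma^2$) gives $n^{-1}M_n\stackrel{d}{\to}\tfrac{\sigma^2}{2}(W(1)^2-1)$ jointly with $n^{-2}S_n\stackrel{d}{\to}\sigma^2\int_0^1 W(t)^2\,dt$. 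The continuous mapping theorem then delivers
\[
n(\hat{\beta}_n - 1)\stackrel{d}{\to}\frac{W(1)^2 - 1}{2\int_0^1 W(t)^2\,dt},
\]
the Dickey--Fuller limit, which is not Gaussian (it is asymmetric with nonzero mean). Equivalently, the studentized statistic $\sigma^{-1}\sqrt{S_n}(\hat{\beta}_n - 1)$ converges to a non-Gaussian limit, so no deterministic rescaling makes $\hat{\beta}_n$ asymptotically normal; this recovers Example 3 of \cite{lai1982least}, which I would cite for the detailed weak-convergence bookkeeping.

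The main obstacle is that, precisely because the stability condition fails here, the ordinary martingale CLT does \emph{not} apply to $M_n/\sqrt{\sigma^2 S_n}$, and one must instead pass through the functional CLT to identify the non-degenerate, non-Gaussian limit of the jointly rescaled numerator and denominator. This is standard unit-root asymptotics, so the difficulty lies in the weak-convergence bookkeeping rather than in any new idea; the genuinely novel content of the proposition---that this unstable design nonetheless satisfies condition \emph{(i)}---is the easy part established above.
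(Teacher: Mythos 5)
Your proposal is correct, and its skeleton matches the paper's: you reduce condition \emph{(i)} to $S_n=\sum_j x_j^2\stackrel{p}{\to}\infty$ because $p=1$ makes the log-eigenvalue condition automatic, and you separately establish non-normality of the MLE. The difference is that the paper's proof is essentially a two-line citation --- non-normality is delegated wholesale to Example 3 of \cite{lai1982least}, and divergence of $S_n$ to their Equation (4.9) --- whereas you supply a self-contained derivation: Donsker plus continuous mapping gives $n^{-2}S_n\stackrel{d}{\to}\sigma^2\int_0^1 W^2$ (hence divergence), and the telescoping identity $y_n^2-y_0^2=2M_n+\sum_j\epsilon_j^2$ combined with joint weak convergence yields the Dickey--Fuller limit $n(\hat\beta_n-1)\stackrel{d}{\to}(W(1)^2-1)/(2\int_0^1 W^2)$, which is visibly non-Gaussian. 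Your version buys transparency (it makes explicit \emph{why} stability fails to matter for condition \emph{(i)} while killing normality, via the random normalization $S_n\asymp n^2\cdot\text{(random limit)}$) at the cost of redoing standard unit-root bookkeeping the paper simply outsources; the one loose thread in your write-up is the aside that a ``cruder moment bound'' $\E[S_n]\asymp\sigma^2 n^2$ would also suffice --- a first-moment bound alone gives no lower-tail control on $S_n$, so you would need Paley--Zygmund or the FCLT you already invoked. Also, to rule out \emph{every} deterministic rescaling (not just $n$ or $\sqrt{S_n}$) you implicitly need a convergence-of-types argument on top of the joint limit; you gesture at this by citing \cite{lai1982least} for the bookkeeping, which is acceptable but worth stating.
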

\begin{proof}
    The asymptotic non-Normality is established by Lai and Wei in \citep[][Example 3]{lai1982least}. The second part of condition \emph{(i)} is satisfied because the covariates are $1$-dimensional and the first part is satisfied because $\sum_{i=1}^n x_i^2 \overset{p}{\rightarrow} \infty$, which is a direct consequence of \citep[][Equation (4.9)]{lai1982least}.
\end{proof}

We now recall an assumption from \cite{wang2021exact}:

\begin{assumption}[Stability, {\citep[][Assumption 1]{wang2021exact}}]\label{stability-ass}
    There exists a matrix $K_0$ for which the maximum absolute eigenvalue of $A + BK_0$ is less than $1$.
\end{assumption}

\begin{proposition}\label{lqr-conditions}
    Suppose we run the stepwise NCEC algorithm (Algorithm 1 in \cite{wang2021exact}) on an instance of LQR satisfying Assumption~\ref{stability-ass} using any configuration of the hyperparameters $\tau^2 > 0,$ $\beta \in [1/2,1)$ and $\alpha > 0$ permitted therein. Then, this satisfies condition \emph{(i)} of Theorem \ref{theorem:bvm}.
\end{proposition}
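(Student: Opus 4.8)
The plan is to verify both parts of condition \emph{(i)} by first exploiting the block structure of the serialized LQR design matrix to reduce the problem to controlling the eigenvalues of the single $(k+d)\times(k+d)$ Gram matrix $V_n := \sum_{j=1}^n z_j z_j^\top$, where $z_j := (x_j^\top, u_j^\top)^\top$ is the stacked state-action vector. Under the serialization described in Section~\ref{sec:simulations}, the covariate associated with the $\ell$-th coordinate of the state places $z_j$ in the $\ell$-th length-$(k+d)$ block of $\R^{k(k+d)}$ and is zero elsewhere; summing the resulting rank-one terms over the $k$ coordinates and over $j$ shows that $\X_n^\top \X_n$ is block diagonal with $k$ identical diagonal blocks each equal to $V_n$, i.e. $\X_n^\top \X_n = I_k \otimes V_n$. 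Consequently $\lambda_{\mathrm{min}}(\X_n^\top \X_n) = \lambda_{\mathrm{min}}(V_n)$ and $\lambda_{\mathrm{max}}(\X_n^\top \X_n) = \lambda_{\mathrm{max}}(V_n)$, so it suffices to show $\lambda_{\mathrm{min}}(V_n) \stackrel{p}{\to} \infty$ and $\log \lambda_{\mathrm{max}}(V_n) = o_p(\lambda_{\mathrm{min}}(V_n))$.

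For the upper bound, I would use Assumption~\ref{stability-ass} together with the structure of the NCEC control law $u_j = \hat{K}_j x_j + \xi_j$ to argue that the closed-loop system is stabilized, so that the state (and hence $z_j$) grows at most polynomially. Controlling the recursion $x_{j+1} = (A + B\hat{K}_j) x_j + B\xi_j + \epsilon_j$ under stability yields $\E\|x_j\|^2 = O(\mathrm{poly}(j))$, and since the injected and process noises have at most polynomially growing second moments, $\mathrm{tr}(V_n) = \sum_{j=1}^n \|z_j\|^2 = O_p(\mathrm{poly}(n))$. As $\lambda_{\mathrm{max}}(V_n) \le \mathrm{tr}(V_n)$, this gives $\log \lambda_{\mathrm{max}}(V_n) = O_p(\log n)$.

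For the lower bound---the crux of the argument---I would establish persistent excitation, namely that $\lambda_{\mathrm{min}}(V_n)$ diverges at a polynomial rate. The injected exploration noise $\xi_j$ excites the action directions, contributing cumulative excitation growing polynomially in $n$ (at a rate governed by $\beta$, and divergent precisely because $\beta < 1$), while the process noise $\epsilon_j$ propagating through the stabilized dynamics excites the state directions. The main obstacle is that $x_j$ and $u_j$ are correlated through the random, time-varying feedback gain $\hat{K}_j$, so the rank-one terms $z_j z_j^\top$ cannot be treated as if drawn from a fixed well-conditioned distribution; handling this cleanly requires either a change of variables isolating the independent innovation $(\epsilon_j, \xi_j)$ from the predictable part, or directly invoking the minimum-eigenvalue lower bounds proven for the NCEC algorithm in \cite{wang2021exact} en route to their consistency and regret guarantees. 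I would take the latter route, citing their excitation lemma to conclude $\lambda_{\mathrm{min}}(V_n) \gtrsim n^{1-\beta} \stackrel{p}{\to} \infty$.

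Finally, I would combine the two bounds: since $\lambda_{\mathrm{min}}(V_n)$ grows polynomially like $n^{1-\beta}$ with $1 - \beta > 0$, while $\log \lambda_{\mathrm{max}}(V_n) = O_p(\log n)$, the ratio $\log \lambda_{\mathrm{max}}(V_n)/\lambda_{\mathrm{min}}(V_n) \stackrel{p}{\to} 0$ and $\lambda_{\mathrm{min}}(V_n) \stackrel{p}{\to} \infty$. Transferring these back through the block-diagonal identity $\X_n^\top \X_n = I_k \otimes V_n$ establishes condition \emph{(i)} of Theorem~\ref{theorem:bvm}, completing the proof. I expect Step three (the persistent-excitation lower bound) to be the genuine difficulty, with the remaining steps being largely routine given the stability assumption and the block reduction.
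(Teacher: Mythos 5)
Your overall route matches the paper's: the same Kronecker reduction $\X_n^\top\X_n = I_k\otimes V_n$ (the paper writes $\bG_n$ for your $V_n$), followed by an appeal to \cite{wang2021exact} for the spectrum of $V_n$. The paper, however, extracts \emph{both} eigenvalue bounds from a single cited result---Theorem 3 of \cite{wang2021exact}, which gives the exact normalization $D_n^{-1}\bG_n (D_n^{-1})^\top \overset{p}{\to} I_{k+d}$ with an explicit $D_n$---and then obtains $\lambda_{\mathrm{max}}(D_nD_n^\top)=O(n)$ and $\lambda_{\mathrm{min}}(D_nD_n^\top)=\Omega(n^\beta\log^\alpha n)$ by operator-norm bookkeeping. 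Using that theorem for the upper bound as well is not merely cosmetic: your trace argument treats stability of the time-varying closed loop $A+B\hat K_j$ as routine, but the gains $\hat K_j$ are random estimates, and the fact that they eventually stabilize the system is itself a conclusion of \cite{wang2021exact} rather than an immediate consequence of Assumption~\ref{stability-ass}. Your plan is therefore sound in structure, but the ``largely routine'' steps lean on the same source as the hard one.

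One concrete misstatement: the excitation rate is not $n^{1-\beta}$, and divergence is not ``precisely because $\beta<1$.'' In NCEC the injected exploration noise has decaying variance of order $j^{\beta-1}\log^\alpha j$, so its cumulative contribution---which is the bottleneck direction---is of order $n^\beta\log^\alpha n$; correspondingly $\lambda_{\mathrm{min}}(V_n)=\Omega_p(n^\beta\log^\alpha n)$, which diverges because $\beta\geq 1/2$ and $\alpha>0$ ($\beta<1$ is what makes the exploration \emph{vanish}, not what makes $\lambda_{\mathrm{min}}$ grow). The factor $n^{1-\beta}\log^{-\alpha}(n)$ you may be recalling appears instead inside the state-direction block $C_n$ of the paper's proof, and its product with the overall prefactor $n^{\beta}\log^{\alpha}(n)$ is what yields the $O(n)$ maximum eigenvalue. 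Since your argument needs only \emph{some} polynomially divergent lower bound and you defer to the excitation results of \cite{wang2021exact} anyway, this error does not break the proof, but the quoted rate and the stated role of $\beta$ should be corrected.
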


\begin{proof}
First, observe that our Gram matrix $\tilde{X}^\top \tilde{X}$ is more simply expressed as $I_{k\times k} \otimes \bG_n$ where $\otimes$ denotes Kronecker product and $\bG_n := \sum_{i=1}^n \begin{bmatrix}
    x_i\\
    u_i
\end{bmatrix} \begin{bmatrix}
    x_i\\
    u_i
\end{bmatrix}^\top$ denotes the Gram matrix. Because the largest and smallest eigenvalues of $I_{k\times k} \otimes \bG_n$ are the same as that of $\bG_n$ it suffices to study the spectrum of the latter. Theorem 3 of \cite{wang2021exact} shows, in the stabilizable regime, that \[D_n^{-1}\bG_n  \left(D_n^{-1}\right)^{\top} \overset{p}{\rightarrow} I_{k+d},\] where \begin{equation}\label{Dn-def}D_n = n^{\beta/2}\log^{\alpha/2}(n)\begin{bmatrix}
    I_k & 0\\
    K & I_d
\end{bmatrix}\begin{bmatrix}
    C_n^{1/2} & 0\\
    0 & \sqrt{\tau^2/\beta}I_d
\end{bmatrix},\end{equation}

\begin{equation}\label{cn-def}
C_n = n^{1-\beta} \log^{-\alpha}(n) \sum_{p=0}^\infty (A + BK)^p ((A + BK)^p)^\top \sigma^2 + \frac{\tau^2}{\beta} \sum_{q=0}^\infty (A + BK)^q B B^\top ((A + BK)^q)^\top,
\end{equation} and $K$ is a matrix that does not depend on $n$ (see equation (3) of \cite{wang2021exact} for a precise definition).

from which it follows that \begin{equation}\label{spectral-norm-consistency}\left\|\bG_n  - D_nD_n^\top \right\| = o_p(1).\end{equation} Below, we show that $\lambda_{\max}\left(D_nD_n^\top\right) \leq O(n)$ and $\lambda_{\min}\left(D_nD_n^\top\right) \geq \Omega(n^\beta \log^\alpha (n))$. Because Equation~\eqref{spectral-norm-consistency} implies both that $\lambda_{\min}(\bG_n) - \lambda_{\min}\left(D_nD_n^\top\right)= o_p(1)$ and $\lambda_{\min}(\bG_n) - \lambda_{\min}\left(D_nD_n^\top\right)= o_p(1)$, we indeed have that condition \emph{(i)} of Theorem~\ref{theorem:bvm} is met: $\log \left(\lambda_{\max}(\bG_n)\right) = o_p(\lambda_{\min}(\bG_n))$ and $\lambda_{\min}(\bG_n)\overset{p}{\rightarrow}\infty$.

\paragraph{Upper bound on $\lambda_{\max}(D_nD_n^\top)$} By submultiplicativity of the operator norm, we have, using equation~\eqref{Dn-def}, that the largest eigenvalue of $D_nD_n^\top$ is at most \[n^{\beta}\log^{\alpha}(n)\left\|\begin{bmatrix}
    I_k & 0\\
    K & I_d
\end{bmatrix}\right\|^2\left\|\begin{bmatrix}
    C_n^{1/2} & 0\\
    0 & \sqrt{\tau^2/\beta}I_d
\end{bmatrix}\right\|^2\] The middle term is of constant order and therefore it suffices to upper-bound the last term. Because $\left\|\begin{bmatrix}
    C_n^{1/2} & 0\\
    0 & \sqrt{\tau^2/\beta}I_d
\end{bmatrix}\right\| \leq \max\left(\|C_n^{1/2}\|, \|\sqrt{\tau^2/\beta}I_d\|\right)$ and since $\|\sqrt{\tau^2/\beta}I_d\|$ is again of constant order it suffices to simply bound $\|C_n^{1/2}\|$. Using equation~\eqref{cn-def} and triangle inequality, this is at most $O(n^{(1-\beta)/2} \log^{-\alpha/2}(n))$. Consequently, the maximum eigenvalue of $D_nD_n^\top$ is $O\left(n\right)$.

\paragraph{Lower bound on $\lambda_{\min}(D_nD_n^\top)$} To lower bound the minimum eigenvalue of $D_n D_n^\top$, we apply the same argument to the maximum eigenvalue of $(D_n^\top)^{-1} D_n^{-1}$. The largest eigenvalue of $(D_n^\top)^{-1} D_n^{-1}$ is at most

\[
n^{-\beta} \log^{-\alpha}(n) \left\|\begin{bmatrix}
    I_k & 0\\
    K & I_d
\end{bmatrix}^{-1}\right\|^{2}\left\|\begin{bmatrix}
    C_n^{1/2} & 0\\
    0 & \sqrt{\tau^2/\beta}I_d
\end{bmatrix}^{-1}\right\|^{2}
\]

Then, $\left\|\begin{bmatrix}
    C_n^{1/2} & 0\\
    0 & \sqrt{\tau^2/\beta}I_d
\end{bmatrix}^{-1}\right\| \leq \max(||C_n^{-1}||^{1/2}, \sqrt{\beta/\tau^2}) = \max(\lambda_{\mathrm{min}}(C_n)^{-1/2}, \sqrt{\beta/\tau^2})$. Again using equation (\ref{cn-def}) and the triangle inequality, this is at most $O(1)$, meaning the minimum eigenvalue of $D_n D_n^\top$ is at least $\Omega(n^\beta \log^\alpha(n))$.

\end{proof}

\textbf{Proof of Corollary 1:} 
Suppose by contradiction that there is some $k$ where

\[
\sup_{P \in \mathcal{P}_n} P(||\pi(\beta|H_n) - \mathcal{N}(\hat{\beta}_n, \sigma^2(\X_n^\top \X_n)^{-1})||_{TV} > c) > k
\]

for infinitely many $n$. Then, for all such $n$, there is some sampling rule $\Lambda^n$ whose corresponding distribution trajectory $P$ satisfies

\[
P(||\pi(\beta|H_n) - \mathcal{N}(\hat{\beta}_n, \sigma^2(\X_n^\top \X_n)^{-1})||_{TV} > c) \geq k.
\]

However, this contradicts the statement of Theorem \ref{thm:bvm_bandit}.

\section{Heteroskedastic Bandits} \label{appx:hetero}

\begin{algorithm}
\caption{Heteroskedastic Gaussian bandits}\label{hetero_bandits}
\begin{algorithmic}
\State \textbf{Input} Action set $A = \{1,\ldots,p\}$, Arm sampling rule $\Lambda : (A \times \R)^* \to \Delta(A)$, Reward distributions $\B_i = \n(\beta_i, \sigma_i^2)$.

\State \textbf{Output} Sampling trajectory $H_n$
\\

\State $H_0 \gets \emptyset$
\For{$j=1,\ldots,n$}
    \State Sample $a_j \sim \Lambda(\cdot | H_{j-1})$
    \State Sample $y_j \sim \n(\beta_{a_j}, \sigma_{a_j}^2)$
    \State $H_j \gets \{(x_1,y_1),\ldots,(x_j,y_j)\}$
\EndFor
\end{algorithmic}
\end{algorithm}

\begin{theorem} \label{thm:bvm_hetero}
    Suppose we generate a length-$m_n$ trajectory $H_{m_n}^n = ((a_1^n, y_1^n),\ldots,(a_n^n, y_n^n))$ for each $n$ according to decision rules $\Lambda^n$ in the case of Heteroskedastic Gaussian bandits. Assume the sampling procedure satisfies the following conditions:

    \begin{enumerate}[(i)]
        \item $\min_i N_{i,n} \stackrel{p}{\to} \infty$ where $N_{i,n} = \sum_{j=1}^n I[a_j^n = i]$.

        \item $\pi(\cdot)$ is continuous and bounded on $\R^p$ with positive density at $\beta_0$.
    \end{enumerate}

    Then, the posterior distribution $\pi(\beta | H_{m_n}^n)$ satisfies

    \[
    \|\pi(\beta | H_{m_n}^n) - \n(\hat{\beta}_n, \diag(\sigma_i^2 N_{i,n}^{-1}))\|_{\mathrm{TV}} \stackrel{p}{\to} 0.
    \]
\end{theorem}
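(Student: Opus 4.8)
The plan is to follow the proof of Theorem~\ref{thm:bvm_bandit} essentially verbatim, the only substantive change being that the per-arm variances $\sigma_i^2$ now enter the factorized likelihood and hence the (still diagonal) covariance of the representative normal. First I would write out the posterior. Since each $a_j^n$ selects a single arm, the likelihood factorizes across arms, and after dropping factors independent of $\beta$,
$$L(\beta; H_{m_n}^n) \propto \prod_{i=1}^p \exp\left(-\frac{N_{i,n}}{2\sigma_i^2}(\beta_i - \hat{\beta}_{n,i})^2\right) = \exp\left(-\frac{1}{2}(\beta - \hat{\beta}_n)^\top \Sigma_n^{-1}(\beta - \hat{\beta}_n)\right),$$
where $\Sigma_n = \diag(\sigma_i^2 N_{i,n}^{-1})$ and $\hat{\beta}_{n,i}$ is the arm-$i$ sample mean. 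Thus by Bayes' rule the posterior is proportional to $\pi(\beta)$ times exactly the density of the representative normal $\n(\hat{\beta}_n, \Sigma_n)$.

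Next, exactly as in Theorem~\ref{thm:bvm_bandit}, I would invoke Lemma~\ref{lemma:TV} with the representative normal in the role of $Q$ and the posterior in the role of $P$; the common Gaussian factor cancels, yielding
$$\|\pi(\beta|H_{m_n}^n) - \n(\hat{\beta}_n, \Sigma_n)\|_{\mathrm{TV}} \leq \E\left[\left(\frac{\pi(\kappa_n)}{\pi(\beta_0)} - 1\right)_+ \Big| H_{m_n}^n\right],$$
where $\kappa_n \mid H_{m_n}^n \sim \n(\hat{\beta}_n, \Sigma_n)$. I would then establish $\kappa_n \stackrel{p}{\to} \beta_0$ through the decomposition $\kappa_n = \beta_0 + (\hat{\beta}_n - \beta_0) + Z_n$ with $Z_n \mid H_{m_n}^n \sim \n(0, \Sigma_n)$. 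Consistency of $\hat{\beta}_n$ follows from the heteroskedastic analogue of Lemma~\ref{lemma:triangular}: that serialization argument uses only the strong law of large numbers applied arm-by-arm to each i.i.d. reward stream, which holds for any finite per-arm variance, so condition~(i) ($\min_i N_{i,n} \stackrel{p}{\to} \infty$) again gives $\hat{\beta}_{n,i} \stackrel{p}{\to} \beta_{0,i}$. For the noise term, each coordinate $Z_{n,i} \mid H_{m_n}^n \sim \n(0, \sigma_i^2 N_{i,n}^{-1})$ collapses to a point mass since $N_{i,n} \stackrel{p}{\to} \infty$, whence $Z_n \stackrel{p}{\to} 0$.

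Finally, since $\pi$ is bounded and continuous with $\pi(\beta_0) > 0$, the integrand $\left(\pi(\kappa_n)/\pi(\beta_0) - 1\right)_+$ is bounded and converges to $0$ in probability, so Vitali's convergence theorem (as in Theorem~\ref{thm:bvm_bandit}) gives $\E[(\pi(\kappa_n)/\pi(\beta_0) - 1)_+] \to 0$, and hence the TV distance converges to $0$ in probability. I do not anticipate any serious obstacle here: the heteroskedasticity affects only the per-coordinate scaling and is absorbed cleanly into the diagonal matrix $\Sigma_n$ precisely because the bandit likelihood factorizes across arms. The one point requiring (routine) care is confirming that the consistency lemma genuinely carries over to unequal variances---but since its argument invokes only the arm-wise SLLN, it is insensitive to the actual variance values, and condition~(i) supplies the needed divergence of every arm count.
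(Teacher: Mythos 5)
Your proof is correct, but it takes a different route from the paper's. The paper proves Theorem~\ref{thm:bvm_hetero} by a short reduction: it rescales the problem to a homoskedastic instance, setting $\tilde{\beta}_i = \beta_i/\sigma_i$ (equivalently working with rewards $y_j/\sigma_{a_j}$, which have unit variance), transforming the prior to $\tilde{\pi}(\tilde{\beta}) \propto \pi(\sigma * \tilde{\beta})$, invoking Theorem~\ref{thm:bvm_bandit} verbatim, and then mapping back, using the fact that total variation distance is invariant under the bijective reparametrization and that the pushforward of the rescaled representative normal under $\tilde{\beta} \mapsto \sigma * \tilde{\beta}$ is exactly $\n(\hat{\beta}_n, \diag(\sigma_i^2 N_{i,n}^{-1}))$. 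You instead rerun the direct argument of Theorem~\ref{thm:bvm_bandit} with the diagonal covariance $\Sigma_n = \diag(\sigma_i^2 N_{i,n}^{-1})$: the factorized likelihood, the Lemma~\ref{lemma:TV} bound, the decomposition $\kappa_n = \beta_0 + (\hat{\beta}_n - \beta_0) + Z_n$, and Vitali's theorem all go through unchanged, and you correctly flag the one nontrivial dependency---that the consistency argument of Lemma~\ref{lemma:triangular} (serialization plus the arm-wise SLLN) is insensitive to unequal variances, so condition~(i) still delivers $\hat{\beta}_n \stackrel{p}{\to} \beta_0$. The trade-off: the paper's reduction is shorter and reuses Theorem~\ref{thm:bvm_bandit} as a black box, at the cost of some implicit bookkeeping (checking the rescaled prior satisfies the hypotheses and that TV is preserved under the linear map); your direct proof is slightly longer but self-contained and makes transparent that homoskedasticity was never essential to the original argument. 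Both are valid proofs of the statement.
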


\begin{proof}
    We rescale this problem to an instance of the homoskedastic bandit problem and apply Theorem \ref{thm:bvm_bandit}. Let the covariate sampling rule be $\tilde{\Lambda}^n(H_{j-1}) = e_{\Lambda^n(H_{j-1})}$ and the rescaled parameters be $\tilde{\beta}_i = \frac{\beta_i}{\sigma_i}$ with the homoskedastic variance $\tilde{\sigma} = 1$. Let $\tilde{\pi}(\tilde{\beta}) \propto \pi(\sigma * \tilde{\beta})$ where $*$ represents element-wise multiplication where $\sigma$ is the vector of $(\sigma_i)_{i=1}^p$. Then, by Theorem \ref{thm:bvm_bandit},

    \[
    \|\tilde{\pi}(\tilde{\beta} | \tilde{H}_{m_n}^n) - \n(\tilde{\hat{\beta}}_n, (\tilde{\X}_n^\top \tilde{\X}_n)^{-1})\|_{\mathrm{TV}} \stackrel{p}{\to} 0.
    \]

    Rescaling the posterior back to the original parameters gives the desired result.
\end{proof}

\section{Parametric BvM} \label{section:parametric}

There are considerable challenges when extending the BvM result to parametric models satisfying only weak regularity conditions such as differentiability in quadratic mean. Consider replacing the normal distribution in Algorithm \ref{gaussian_regression} with a general parametric model $P_\theta(\cdot | x_j)$ with $\theta = \beta^\top x_j$. We may want to show that in this setting, the posterior distribution is asymptotically normal similar to the BvM statement, i.e.

\[
\|\pi(\theta | H_n) - \n(\hat{\theta}_n, J_n^{-1})\|_{\mathrm{TV}} \stackrel{p}{\to} 0
\]

where $J_n = -\sum_{j=1}^n \ddot{\ell}_{\hat{\theta}_n}(y_j | x_j)$ is the empirical Fisher Information and $\ell_\theta = \log P_\theta$ is the log-likelihood. Suppose we follow the steps of the classical BvM proof. Letting $\pi^{C_n}(\theta | H_n)$ and $\n^{C_n}(\hat{\theta}_n, J_n^{-1})$ denote truncations of these distributions to the set $C_n$, the classical proof proceeds in the following three steps---we show each of the following convergences:

\begin{align*}
    \|\pi(\theta | Y) - \pi^{C_n}(\theta | Y)\|_{\mathrm{TV}} &\stackrel{p}{\to} 0 \\
    \|\pi^{C_n}(\theta | Y) - \n^{C_n}(\hat{\theta}_n, J_n^{-1})\|_{\mathrm{TV}} &\stackrel{p}{\to} 0 \\
    \|\n^{C_n}(\hat{\theta}_n, J_n^{-1}) - \n(\hat{\theta}_n, J_n^{-1})\|_{\mathrm{TV}}&\stackrel{p}{\to} 0.
\end{align*}

The first step requires a method of truncating the posterior distribution, which is in general quite difficult without strong assumptions on the behavior of the adaptive system similar to the results of \cite{kleijn2012bernstein} and \cite{connault2014weakly}. With independent data, we could truncate the posterior due to Hoeffding bounds on the likelihood, but these bounds do not apply in adaptively collected data. The second step was implied by a second-order Taylor expansion of the log-likelihood in the classical proof, but in adaptive settings, the Fisher information matrix $J_n$ may grow at different rates in different directions, i.e. $\lambda_{\mathrm{min}}(J_n)$ may grow at a different rate than $\lambda_{\mathrm{max}}(J_n)$. This would make an argument based on a Taylor expansion more complicated and a valid proof seems to require further assumptions on the growth rate of $J_n$. Finally, the third step was implied in the i.i.d. setting by the local consistency of $\hat{\theta}_n$, i.e. $J_n^{1/2} (\hat{\theta}_n - \theta_0) = O_p(1)$ but this condition may be violated in adaptive experiments. In fact, as Lai and Wei showed, we require some nontrivial assumptions on the empirical Fisher Information to guarentee consistency, even in the case of Gaussian regression.

\section{Real-world example} \label{section:zozo}

We compute the BvM total variation distance on a real-world instance of Bernoulli Thompson sampling provided in \cite{saito2020large}. The dataset consists of the interaction of a fashion recommendation algorithm with users, where the algorithm is optimizing for the number of clicks on fashion items. Although the original dataset is modeled with a contextual bandit setting with a batch size of 3, we simplify their environment to an $80$-armed bandit problem by disregarding the context and flattening the batch size. These simplifications were made solely for computational convenience. The dataset consists of 12m impressions or time steps, during which we update the posterior distribution with a $\text{Beta}(1, 1)$ prior. The BvM total variation distance throughout this rollout is shown below.

\begin{figure}[!h]
    \centering
    \includegraphics[width=0.49\linewidth]{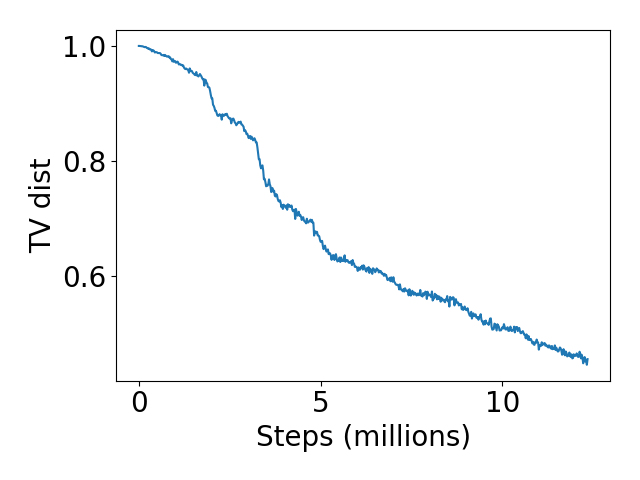}

    \caption{BvM TV distance for the Zozo dataset with prior $\text{Beta}(1, 1)$. TV estimates computed with $10^4$ samples and have standard error at most $0.004$.}

    \label{fig:zozo}
\end{figure}

As seen in Figure \ref{fig:zozo}, the convergence of the total variation distance is quite slow. This is perhaps due to the sparse success rates in the Bernoulli bandit and the larger dimensionality of the parameter space. It should be noted that the original dataset contained missing data within batches, i.e. not all batches had size 3.

\section{Plots} \label{section:plots}

\begin{figure}[!h]
    \centering
    \includegraphics[width=0.49\linewidth]{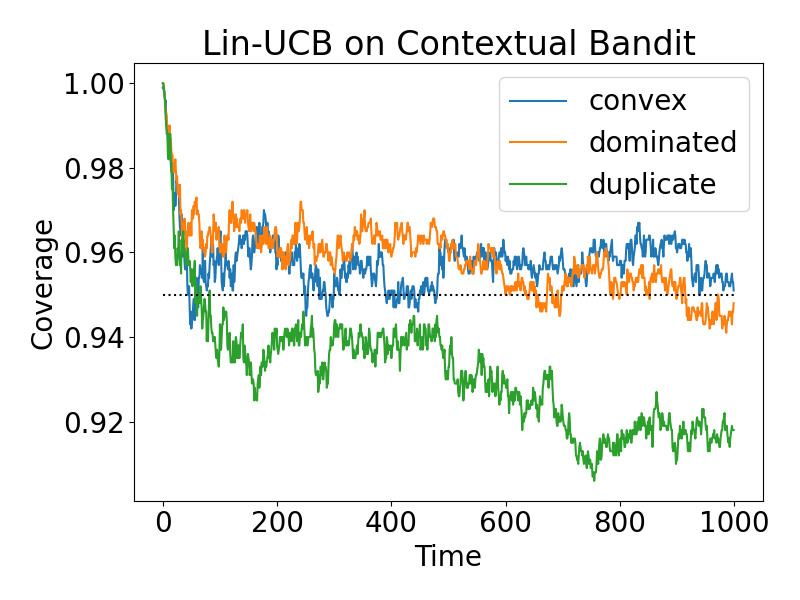}
    \includegraphics[width=0.49\linewidth]{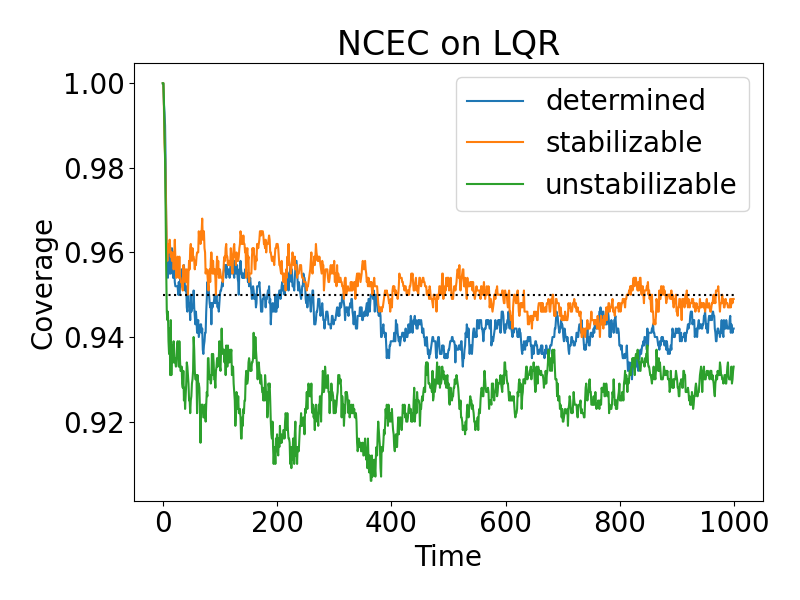}

    \caption{Frequentist coverage of the Bayesian credible interval for the contextual bandit and LQR, under the same configurations as Section \ref{sec:simulations}. Coverage estimates shown have standard error at most $0.004$.}
\end{figure}

\begin{figure}[!h]
    \centering
    \includegraphics[width=0.49\linewidth]{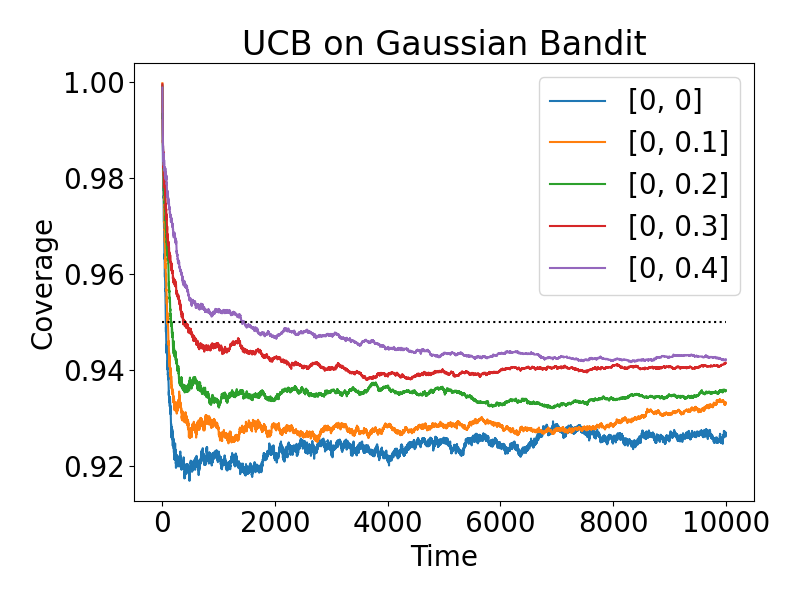}
    \includegraphics[width=0.49\linewidth]{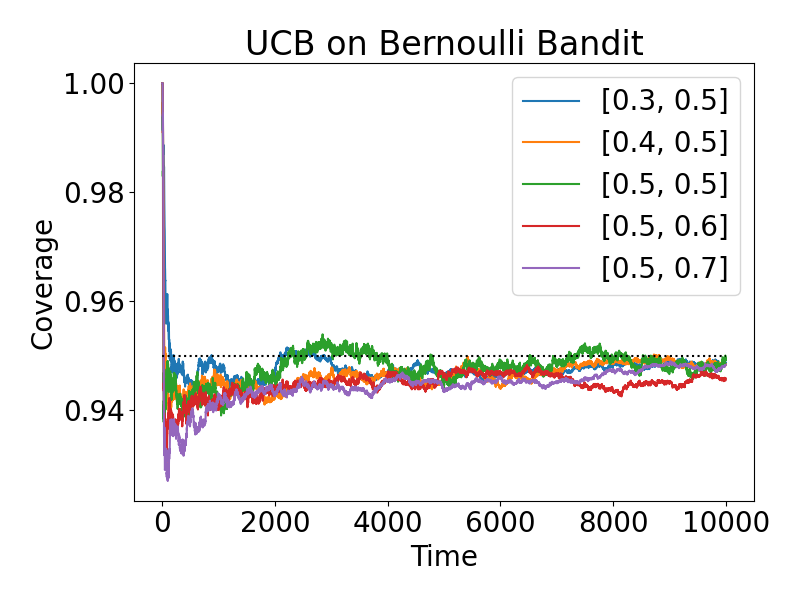}
    \includegraphics[width=0.49\linewidth]{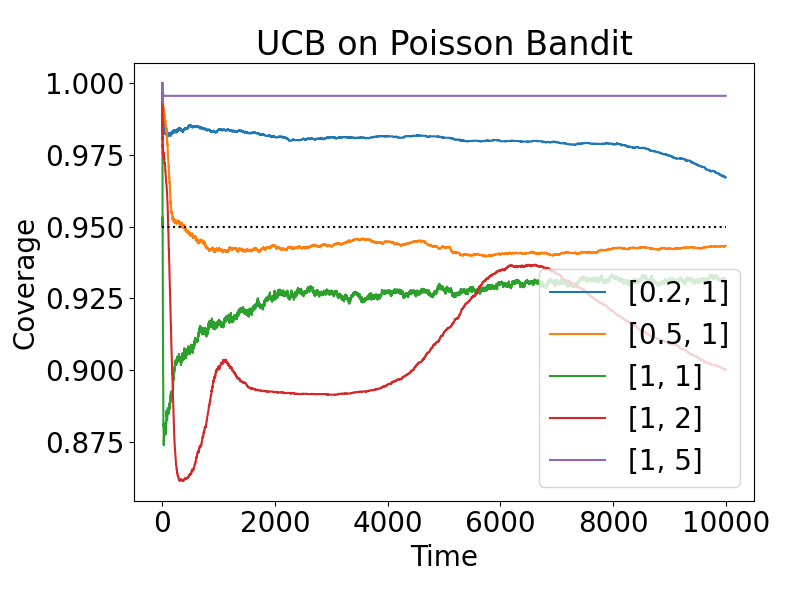}

    \caption{Frequentist coverage of the Bayesian credible interval for UCB on Gaussian bandits, Bernoulli bandits and Poisson bandits, under the same configurations as Section \ref{sec:simulations}. Coverage estimates shown have standard error at most $0.004$.}
\end{figure}

\end{document}